\theoremstyle{plain}
\newtheorem{theorem}{Theorem}
\newtheorem{proposition}[theorem]{Proposition}
\newtheorem{exercise}[theorem]{Exercise}
\theoremstyle{definition}
\newtheorem{definition}[theorem]{Definition}
\newtheorem{example}[theorem]{Example}
\newtheorem{remark}[theorem]{Remark}
\newtheorem{question}[theorem]{Question}
\begin{document}

\title[Minimal hard surface-unlink and classical unlink diagrams]{MINIMAL HARD SURFACE-UNLINK\\ AND CLASSICAL UNLINK DIAGRAMS}

\author[M. Jab{\l}onowski]{Micha{\l} Jab{\l}onowski}

\dedicatory{Dedicated to Witold Rosicki on his 65th birthday.}

\address{Institute of Mathematics, Faculty of Mathematics, Physics and Informatics, University of Gda\'nsk, 80-308 Gda\'nsk, Poland}

\email{michal.jablonowski@gmail.com}

\keywords{marked graph diagram, surface-link, ch-diagram, hard unknot, hard unlink}

\subjclass[2010]{57Q45, 57M25, 68R10, 05C10} 

\date{July 28, 2018.}

\begin{abstract}
We describe a method for generating minimal hard prime surface-link diagrams. We extend the known examples of minimal hard prime classical unknot and unlink diagrams up to three components and generate figures of all minimal hard prime surface-unknot and surface-unlink diagrams with prime base surface components up to ten crossings.
\end{abstract}

\maketitle

\section{Introduction}
Hard unknots and unlinks are diagrams of trivial knots and links that have to be made more complicated before they can be simplified. In the classical knot theory of circles in $3$-space with their generic diagrams on a $2$-sphere, they have quite a long history reaching the Goeritz example in \cite{Goe34} (which is the mirror of $11_{\{0,34\}}^{\{1,2,Ori\}}$ in our notation). Recent studies of hard unknots are related to the study of a recombination of DNA (see, e.g. \cite{KauLam06}, \cite{KauLam07}), they also are subject of testing the sharpness of new upper bounds on the number of Reidemeister moves needed to unknot an unknot (see, e.g. \cite{Kau16}).

One dimensional higher analogue of classical knots and links can be studied by the marked graph diagrams. These diagrams are planar (or spherical) diagrams obtained by cutting, in the level of all its saddle points, an embedded surface in $\mathbb{R}^4$ in the hyperbolic splitting position. In research papers there are at least two further ways to transform marked graph diagrams: to the banded links, e.g. \cite{Jab16}, \cite{Swe01} and to monoidal structures, e.g. \cite{Jab17}, \cite{KeaKur08}.

It is believed that the results in \cite{Swe01} prove the Yoshikawa conjecture about the complete generating set of local moves between marked graph diagrams presenting surface-links of the same type (this result we use in the next section) and that \cite{KeaKur08} shares more details on this proof.

One has to be careful reading the latter paper. On page 1225, the last two types of relations in $(1-4)$ of the construction of the semi-group do not correspond to valid moves for surface-links, because they change the marked vertex type. One can see that changing for example one vertex type in the standard torus changes its type to the unknotted sphere. Moreover, the top right image in the figure 5 on page 1231 does not represent the spun $2$-knot of the trefoil because it has the trivial band (for a flat banded link representation of the spun $2$-knot of the trefoil see \cite{Jab16}).

As for hard unknots and unlinks from the classical knot theory, it is stated in \cite{JabSaz07} on page 49 that the smallest hard unlink diagrams have $9$ crossings, that there are two diagrams for hard unknots with $9$ crossings and that there are six diagrams for hard unknots and five diagrams for hard unlinks with $10$ crossings. 

This does not meet our computation results because an implementation of our algorithm (described in this paper) generates (up to mirror images) one hard prime unlink with $8$ crossings presented in Fig.\;\ref{hard08}, generates four hard prime unknots with $9$ crossings presented in Fig.\;\ref{hard09} (three of them with pairwise different the number of $2$-gons) and generates more hard prime unlinks and unknots with $10$ crossings (e.g. $10_{\{0,21\}}^{\{1,2,Ori\}}$, $10_{\{0,3\}}^{\{2,4,Ori\}}$ with different the number of $6$-gons and $4$-gons respectively).

In this paper we describe a method for generating hard prime surface-knot and surface-link diagrams and give the EPD codes of the results for their ch-diagrams up to $10$ crossings (and some $11$ and $12$ crossing diagrams). We also extend the tables of minimal hard prime classical unknots and unlinks mentioned above and generate all hard prime classical unknots and unlinks up to $12$ crossings. We also provide hard alternative diagrams to non-hard diagrams from Yoshikawa's table that have the same the number of classical and marked crossings.

For the generation of the sphere partition graphs we used the \emph{plantri} program by G. Brinkmann and B.D. McKay. The Alexander ideals for surface-links were computed using the \emph{KNOT} program by K. Kodama. Images of hard marked graph diagrams were obtained with the help of E. Redelmeier's program \emph{DrawPD}, further modified with the graphical program {\it Inkscape} and are drawn up to mirror image. Our EPD codes were generated using the {\it Mathematica} program and they can be found in the arXiv source file of this article's preprint version. From each such code one can generate unambiguously its spherical diagram.

\section{Basic definitions and theorems}
An embedding (or its image) of a closed (i.e. compact, without boundary) surface $F$ into $\mathbb{R}^4$ is called a \emph{surface-link}, with $F$ being the \emph{based surface} for this embedding. Two surface-links are \emph{equivalent} (or have the same \emph{type}) if there exists an orientation preserving homeomorphism of the four-space $\mathbb{R}^4$ to itself (or equivalently auto-homeomorphism of the four-sphere $\mathbb{S}^4$), mapping one of those surfaces onto the other.

We will work in the standard smooth category and will use a word \emph{classical}, thinking about theory of embeddings of circles $S^1\sqcup\ldots\sqcup S^1\hookrightarrow \mathbb{R}^3$ modulo ambient isotopy in $\mathbb{R}^3$ with their planar or spherical generic projections. When the base surface is the $2$-sphere or its split unions, then we call the surface-knot a \emph{$2$-knot} or \emph{$2$-link} respectively. In this paper the \emph{primeness} of a diagram is considered with respect to connected sum on the sphere and the \emph{minimality} of a diagram is considered with respect to the total number of its crossings (classical and marked).

To describe a knotted surface in $\mathbb{R}^4$, we will use transverse cross-sections $\mathbb{R}^3\times\{t\}\subset\mathbb{R}^4$ for $t\in\mathbb{R}$, denoted by $\mathbb{R}^3_t$. This method introduced by Fox and Milnor was presented in \cite{Fox62}. Let us present a hyperbolic splitting of a surface-link.

\begin{theorem}[\cite{Lom81}, \cite{KSS82}, \cite{Kam89}]
For any surface-link $F$, there exists a surface-link $F'$ satisfying the following: $F'$ is equivalent to $F$ and has only finitely many Morse's critical points, all maximal points of $F'$ lie in $\mathbb{R}^3_1$, all minimal points of $F'$ lie in $\mathbb{R}^3_{-1}$, all saddle points of $F'$ lie in $\mathbb{R}^3_0$.
\end{theorem}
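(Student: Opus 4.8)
The plan is to run the classical Morse-theoretic argument for the slicing (``time'') function and then to rearrange its critical points according to their index. First I would fix the projection $\pi\colon\mathbb{R}^4\to\mathbb{R}$, $\pi(x_1,x_2,x_3,t)=t$, whose level sets are exactly the slices $\mathbb{R}^3_t$, and consider the restriction $h=\pi|_F$. Since $F$ is closed, a generic small ambient isotopy of $\mathbb{R}^4$ (justified by Sard's theorem together with elementary transversality) makes $h$ a Morse function with only finitely many nondegenerate critical points, and one may further assume their critical values are pairwise distinct. Because $F$ is a $2$-manifold, each critical point has index $0$, $1$ or $2$, that is, it is a local minimum, a saddle, or a local maximum of the slicing.

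The core of the argument is to reorder the critical levels so that the critical value is monotone in the index: all minima below all saddles, and all saddles below all maxima. For this I would invoke the rearrangement theorem of Morse theory (as in Milnor's treatment of the $h$-cobordism theorem). After fixing a gradient-like vector field $X=-\nabla h$, the only obstruction to raising a lower critical point $q$ above a higher one $p$ is a descending trajectory of $X$ from $p$ to $q$; such a trajectory lies in $W^u(p)\cap W^s(q)$, whose expected dimension inside the surface $F$ is $\mathrm{ind}(p)+\bigl(2-\mathrm{ind}(q)\bigr)-2=\mathrm{ind}(p)-\mathrm{ind}(q)$. Hence, precisely in the situation I must correct, namely when the upper point $p$ has the \emph{smaller} index, no such connecting trajectory exists generically, and the two values may be interchanged. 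An iterated (bubble-sort) application of this principle produces a Morse function whose critical values increase with the index.

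It remains to collapse each index class to a single slice. Rescaling the $t$-axis, I would choose regular values $-1<r_1<0<r_2<1$ with all minima below $r_1$, all saddles between $r_1$ and $r_2$, and all maxima above $r_2$. The minima are disjoint caps opening upward, so an ambient isotopy supported in $t<r_1$ pushes their bottoms onto $\mathbb{R}^3_{-1}$, and symmetrically the maxima are pushed onto $\mathbb{R}^3_1$. I expect the genuine obstacle to lie in the middle band: to place all saddles on the single slice $\mathbb{R}^3_0$ I must first arrange, by a further generic (Morse--Smale) perturbation, that no gradient trajectory joins two distinct saddles, since on a surface a separatrix connection between two index-$1$ points is a non-generic, codimension-one phenomenon and can be removed. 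Once the saddles are mutually independent in this sense, one checks that sliding them — and the caps — to their common levels is realizable by an ambient isotopy of $\mathbb{R}^4$ keeping $F$ embedded throughout; equivalently, the handle decomposition induced by $h$ reorganizes so that all $0$-handles, all $1$-handles and all $2$-handles are attached at three separate heights. The resulting $F'$ is equivalent to $F$, since every step was an ambient isotopy, and has its minima in $\mathbb{R}^3_{-1}$, its saddles in $\mathbb{R}^3_0$ and its maxima in $\mathbb{R}^3_1$, as required.
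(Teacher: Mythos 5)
First, a point of comparison: the paper does not prove this statement at all---it is quoted as a known normal-form (hyperbolic splitting) theorem, with citations to Lomonaco, Kawauchi--Shibuya--Suzuki and Kamada---so your proposal has to stand on its own rather than be measured against an in-paper argument.

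On its own terms, the proposal has a genuine gap at its center: you conflate rearranging a Morse function on the abstract surface with isotoping the embedded surface. Milnor's rearrangement theorem, which you invoke, modifies the \emph{function}: given the gradient-like field and the absence of connecting trajectories, it outputs a new function $g$ on $F$ with prescribed critical values. It does not move $F$ inside $\mathbb{R}^4$. The statement to be proved is about the embedding: one needs an equivalence carrying $F$ to $F'$ so that the restriction of the \emph{fixed} coordinate $t$ to the new image has its minima, saddles and maxima at the levels $-1$, $0$, $1$. The rearranged $g$ is in general not the $t$-coordinate of any embedding you have produced, and realizing it as the slicing function of an equivalent embedding is exactly the content of the theorem; it is precisely the step you defer to ``one checks that sliding them \dots is realizable by an ambient isotopy of $\mathbb{R}^4$.'' Moreover, the obstruction analysis relevant to the ambient problem is not the one you carry out: what matters is not gradient trajectories inside $F$ but the position of the rest of $F$ in the complement. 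The classical proofs proceed by direct ambient moves---shrink a minimal disk (respectively a small saddle piece) in its slice, choose a vertical arc from it to the target level missing $F$, which exists by general position since a generic line in $\mathbb{R}^4$ misses the $2$-dimensional surface, and drag the piece through a thin tube around that arc, the tube existing by compactness. This viewpoint also explains an asymmetry your intrinsic index count obscures: a minimum can always be dragged down and a maximum up, whereas a saddle can in general not be dragged below the minimum that feeds its sheets. So your skeleton (generic Morse position, reorder critical levels, collapse to three slices) matches the classical argument, but the engine that turns each reordering step into an ambient isotopy is missing, and the Morse--Smale trajectory analysis does not supply it.
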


The zero section $\mathbb{R}^3_0\cap F'$ of the surface $F'$ in the \emph{hyperbolic splitting} described above gives us then a $4$-regular graph. We assign to each vertex a \emph{marker} that informs us about one of the two possible types of saddle points (see Fig.\;\ref{pic001}) depending on the shape of the section $\mathbb{R}^3_{-\epsilon}\cap F'$ or $\mathbb{R}^3_{\epsilon}\cap F'$ for a small real number $\epsilon>0$. The resulting (rigid-vertex) graph is called a \emph{marked graph} presenting $F$ (also known as a \emph{ch-diagram}).

\begin{figure}[ht]
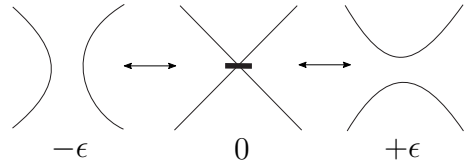

\begin{center}
\begin{lpic}[b(0.5cm)]{./PICTURES/m001(6cm)}
	\lbl[t]{15,-2;$-\epsilon$}
  \lbl[t]{60,-2;$0$}
  \lbl[t]{102,-2;$+\epsilon$}
	\end{lpic}
		\caption{Rules for smoothing a marker.\label{pic001}}
\end{center}
\end{figure}

Making a projection in general position of this graph to $\mathbb{R}^2\times\{0\}\times\{0\}\subset\mathbb{R}^4$ and assigning types of classical crossings between regular arcs, we obtain a \emph{marked graph diagram}. For a marked graph diagram $D$, we denote by $L_+(D)$ and $L_-(D)$ the classical link diagrams obtained from $D$ by smoothing every vertex as presented in Fig.\;\ref{pic001} for $+\epsilon$ and $-\epsilon$ case respectively. We call $L_+(D)$ and $L_-(D)$ the \emph{positive resolution} and the \emph{negative resolution} of $D$, respectively.

Any abstractly created marked graph diagram is a ch-diagram (or it is \emph{admissible}) if and only if both its resolutions are trivial classical link diagrams. In \cite{Yos94} Yoshikawa introduced local moves on admissible marked graph diagrams that do not change corresponding surface-link types and conjectured that the converse is also true. It was resolved as follows.

\begin{theorem}[\cite{Swe01}, \cite{KeaKur08}]
Any two marked graph diagrams representing the same type of surface-link are related by a finite sequence of Yoshikawa local moves presented in Fig.\;\ref{pic002} and their mirror moves (and an isotopy of the diagram in $\mathbb{R}^2$).
\end{theorem}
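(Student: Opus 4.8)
The plan is to regard this as the surface-link analogue of the classical Reidemeister theorem and to prove it by a Cerf-theoretic analysis of a generic one-parameter family of embeddings, in the spirit of the Carter--Saito movie-move theorem. The first step is to upgrade ``same type'' to ``ambient isotopic'': an orientation-preserving self-homeomorphism of $\mathbb{S}^4$ carrying one surface onto the other may be smoothed, and since the orientation-preserving diffeomorphism group of $\mathbb{S}^4$ is connected (Cerf), it is smoothly isotopic to the identity. This yields a smooth ambient isotopy $G_s$, $s\in[0,1]$, between the two surfaces $G_0$ and $G_1$, each already placed in hyperbolic-splitting position with a generic planar projection producing the two given diagrams.

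I would then make the entire family generic with respect to the three pieces of data that define a marked graph diagram: the embedding itself, the height function $t=x_4$ whose Morse data records the maxima, minima and saddles, and the projection $\pi\colon\mathbb{R}^3_0\to\mathbb{R}^2$ whose generic failures record the classical crossings. A standard jet-transversality argument shows that, after a small perturbation, the family meets the degeneracy locus only at finitely many parameters $s_1<\cdots<s_k$, that exactly one codimension-one event occurs at each $s_i$, and that between consecutive events the data assemble into a well-defined, locally constant marked graph diagram $D_s$. The theorem then reduces to showing that each elementary event changes $D_s$ by a planar isotopy or by a single move from Fig.~\ref{pic002} (or its mirror).

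The heart of the argument is the enumeration of these codimension-one events, which I would organize into two families. The projection-only events, in which the Morse data stay fixed, are the classical ones---a cusp, a tangency of two regular sheets, a triple point, together with the versions in which a regular arc sweeps across the image of a marked vertex---and these produce exactly the Reidemeister-type Yoshikawa moves (the classical moves among regular crossings and the moves carrying a regular strand past a vertex). The Morse-data events temporarily violate the normalization that all saddles lie in $\mathbb{R}^3_0$: as $s$ crosses $s_i$ a saddle exchanges height with an extremum, a cancelling saddle--extremum pair is born or dies, or two saddles pass one another. Re-imposing the hyperbolic-splitting normalization on each side of $s_i$ and reading off the change in the zero-section together with its markers produces precisely the remaining, vertex-carrying Yoshikawa moves. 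Both crossing signs and both resolution choices arise as the parameter varies, which is what accounts for the mirror moves.

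The main obstacle I anticipate is the bookkeeping of the markers throughout the Morse-data events and the verification that each local re-normalization picture matches the finite list in Fig.~\ref{pic002} without introducing spurious moves. Each marker records which of the two hyperbolic smoothings a saddle carries, so one must check that every catalogued move carries an admissible marker assignment---that both resolutions $L_+(D)$ and $L_-(D)$ stay trivial and the surface-link type is preserved---and in particular that no event silently flips a vertex type, the precise defect flagged earlier in this paper for one previous treatment. Establishing completeness of the list thus amounts to confirming that the transversality theory produces no further codimension-one strata and that mixed events (for instance a crossing sliding past an extremum) are themselves generated by the listed moves; this global completeness check, rather than any single local computation, is where the real difficulty lies.
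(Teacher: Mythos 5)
This statement is not proved in the paper at all: it is imported wholesale from Swenton \cite{Swe01} and Kearton--Kurlin \cite{KeaKur08}, and the author is in fact guarded about those sources (``It is believed that the results in \cite{Swe01} prove the Yoshikawa conjecture\dots'') and explicitly flags errors in \cite{KeaKur08}. So there is no in-paper proof to compare against; the honest comparison is with the cited literature, and there your plan is essentially Swenton's strategy (normalize into hyperbolic splitting position, run a generic one-parameter family, and catalogue the codimension-one events), not Kearton--Kurlin's semigroup/universal-polyhedron route.

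Judged as a proof, your proposal has two genuine gaps. First, a concrete error at the outset: the connectedness of the orientation-preserving diffeomorphism group of $\mathbb{S}^4$ is \emph{not} a theorem of Cerf --- Cerf proved $\Gamma_4=0$, i.e.\ the statement one dimension down, for $\mathbb{S}^3$; whether $\pi_0\mathrm{Diff}^+(\mathbb{S}^4)$ is trivial is an open problem. This step can be repaired (work in $\mathbb{R}^4$, where $\mathrm{Diff}^+(\mathbb{R}^4)$ is connected by the Alexander trick $f_t(x)=f(tx)/t$, or take smooth ambient isotopy as the definition of equivalence), but as written the reduction is unjustified. Second, and more seriously, the entire mathematical content of the theorem is the step you defer: the claim that after jet-transversality there are \emph{only} the listed codimension-one events, and that each Morse-data event (saddle/extremum exchanges, births and deaths, saddle passings), after re-imposing the hyperbolic-splitting normalization, changes the marked diagram by a move in Fig.~\ref{pic002} with consistent markers. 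You correctly identify this completeness check as ``where the real difficulty lies,'' but you do not carry it out, and history shows it is exactly where published attempts have stumbled --- the paper's own criticism of \cite{KeaKur08} concerns moves that silently change a marker type, the very failure mode you would need to exclude. As it stands, the proposal is a credible research plan that reproduces the known proof skeleton, not a proof.
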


\begin{figure}[ht]
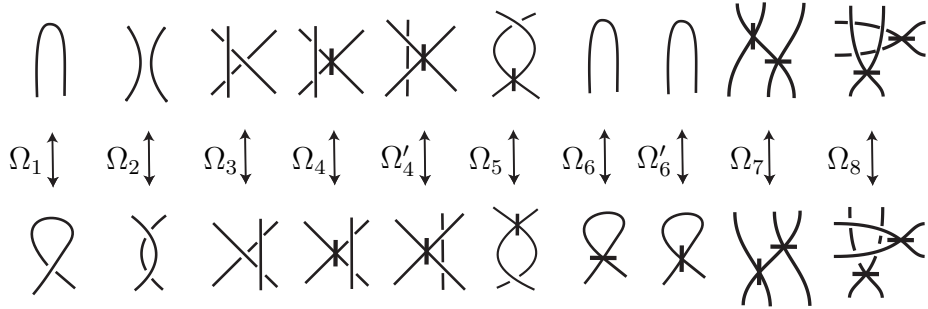

\begin{center}
\begin{lpic}[l(0.4cm)]{./PICTURES/m002(11.9cm)}
	\lbl[r]{3,33;$\Omega_1$}
  \lbl[r]{25,33;$\Omega_2$}
	\lbl[r]{47,33;$\Omega_3$}
	\lbl[r]{67,33;$\Omega_4$}
	\lbl[r]{87,33;$\Omega_4'$}
	\lbl[r]{107,33;$\Omega_5$}
	\lbl[r]{128,33;$\Omega_6$}
	\lbl[r]{145,33;$\Omega_6'$}
	\lbl[r]{166,33;$\Omega_7$}
	\lbl[r]{188,33;$\Omega_8$}
	\end{lpic}
\caption{A set of Yoshikawa moves.\label{pic002}}
\end{center}
\end{figure}

Let us recall the following dependencies between Yoshikawa type moves (from \cite{JKL15}). Where by a \emph{mirror move} we mean the mirror move with respect to classical crossings and by a \emph{switch move} we mean the mirror move with respect to marked vertices.

\begin{itemize}
\item the mirror move to the move $\Omega_1$ can be obtained by the moves $\Omega_1, \Omega_2$ and planar isotopy,
\item the mirror move to the move $\Omega_2$ can be obtained by the move $\Omega_2$ and planar isotopy,
\item the mirror move to the move $\Omega_3$ can be obtained by the moves $\Omega_2, \Omega_3$ and planar isotopy,
\item the switch move to the move $\Omega_4$ can be obtained by the moves $\Omega_2, \Omega_4$ and planar isotopy,
\item the switch move to the move $\Omega_4'$ can be obtained by the moves $\Omega_2, \Omega_4'$ and planar isotopy,
\item the mirror move to the move $\Omega_5$ can be obtained by the moves $\Omega_2, \Omega_5$ and planar isotopy,
\item the switch move to the move $\Omega_5$ can be obtained by the moves $\Omega_1$, $\Omega_2$, $\Omega_3$, $\Omega_4$, $\Omega_4'$, $\Omega_5$ and planar isotopy,
\item the switch and the mirror move to the move $\Omega_5$ can be obtained by the moves $\Omega_1$, $\Omega_2$, $\Omega_3$, $\Omega_4$, $\Omega_4'$, $\Omega_5$ and planar isotopy,
\item the switch move to the move $\Omega_8$ can be obtained by the moves $\Omega_4, \Omega_4',\Omega_8$ and planar isotopy.
\end{itemize}

\section{Generation of ch-diagrams}

\noindent{\bf From sphere partition graphs to shadows}.

We start with generating the isomorphism classes (with respect to the embeddings) of spherical graphs (i.e. graphs that can be drawn on the sphere) that produces partitions of the $2$-sphere. We specifically chose the (simple graph) partitions into regions with exactly four edges at the boundary of each region, which are at least $2$-connected and which vertices are of degree at least two. Picking one graph from each class and excluding mirror images we obtain as a result a set $S_1$.

\begin{figure}[ht]
\begin{center}
\includegraphics[width=0.495\textwidth]{./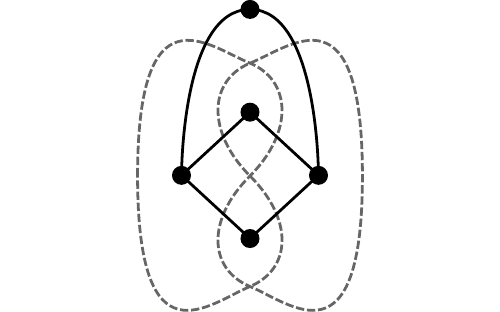}
\includegraphics[width=0.495\textwidth]{./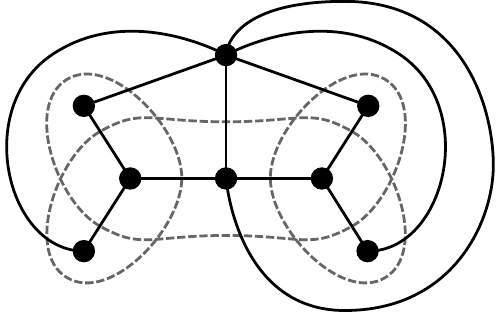}
\end{center}
		\caption{Examples of spherical partitions and their dual graphs, which are link shadows\label{pic224}}
\end{figure}

We then create the dual graph of each graph from the set $S_1$. Therefore we obtain the set of shadows of connected diagrams of classical links, without mirror images, without kinks, without loops, without nugatory crossings, without connected sums of other diagrams (elaboration of various relationships between shadows and graph types one can find in \cite{CCM16} from where we take Fig.\;\ref{pic224}). As a result we obtain here a set $S_2$.

\noindent{\bf From shadows to PD codes of diagrams.}

From each element (shadow) from the set $S_2$ with edges numbered by a pair of letters (that label its ending vertices) we first re-label edges to positive integers, with the caution that we separate the pair of letters that appear four times in the graph to appropriate two pairs of distinct numbers not appeared elsewhere. So that each number is counted exactly twice and is the label of exactly one edge. Then, we re-numerate (for our numeric convenience to have numbers, instead of possible duplication of the letters) edge labels with distinct consecutive positive integers from $1$ up to $2\cdot(\text{number of crossings})$, with increasing labels as we go around each transverse circular component in the shadow (e.g. presented in Fig.\;\ref{M_1_C_10_K_1} when we replace all crossings to the flat ones). We obtain here a set $S_3$.

The number in column $S$ and row $n$ of Table\;\ref{table1} counts the number of all shadows of connected diagrams with $n$ crossings of classical links, without mirror images, without kinks, without loops, without nugatory crossings, without connected sums of other diagrams.

\begin{table}[ht]
\caption{Numerical results of spherical shadows and marked vertex diagrams.\label{table1}}
{\begin{tabular}{@{}lll@{}} 

	n&S&DG\\
	\hline
	2&1&6\\
	3&1&20\\
	4&2&144\\
	5&3&816\\
	6&9&9.504\\
	7&18&74.880\\
		8&62&1.023.744\\
		9&198&13.026.816\\
		10&803&210.912.768\\
		11&3.378&3.545.548.800\\
		12&15.882&66.646.462.464\\
\end{tabular}}
\end{table}

From each element from the set $S_3$ with $n$ flat crossing we generate (by changing every flat crossing to an arbitrary classical crossing) one classical link diagram and store it as a \emph{PD code} (a Planar Diagram code), an unordered set consisting of $n$ elements, each of the form $X[a,b,c,d]$ represents a classical crossing between the edges labelled $a, b, c$ and $d$ starting from the incoming lower strand $a$ and going counterclockwise through $b$, $c$ and $d$. The convention is presented on the left of Fig.\;\ref{pic003}.

\begin{figure}[ht]
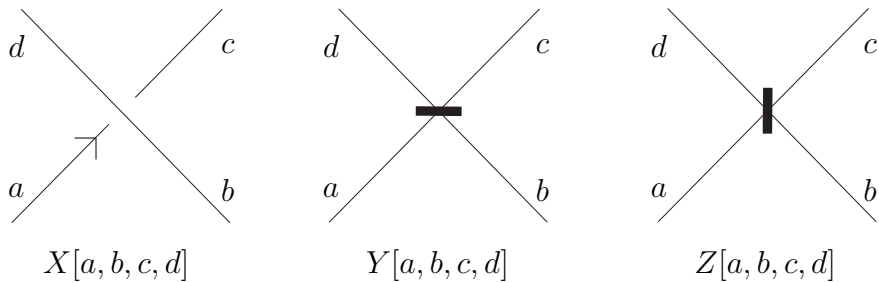

\begin{center}
\begin{lpic}[b(0.5cm),l(0.2cm),r(0.2cm)]{./PICTURES/m003(11.5cm)}
	\lbl[t]{16,-4;$X[a,b,c,d]$}
  \lbl[t]{65,-4;$Y[a,b,c,d]$}
  \lbl[t]{115,-4;$Z[a,b,c,d]$}
	\lbl[r]{2,5;$a$}
	\lbl[r]{50,5;$a$}
	\lbl[r]{100,5;$a$}
	\lbl[r]{2,27;$d$}
	\lbl[r]{50,27;$d$}
	\lbl[r]{100,27;$d$}
	\lbl[l]{32,27;$c$}
	\lbl[l]{80,27;$c$}
	\lbl[l]{130,27;$c$}
		\lbl[l]{32,5;$b$}
	\lbl[l]{80,5;$b$}
	\lbl[l]{130,5;$b$}
	\end{lpic}
		\caption{Interpretation of elements of EPD codes.\label{pic003}}
\end{center}
\end{figure}

From each such PD code, we then generate $2^{n-1}$ possible PD codes by choosing every subset of crossing to be changed by its type (the lower strand over the upper strand) and excluding mirror images. Notice that in the process of changing the type of a crossing by rotating the elements by one in the code for the crossing, the global orientation of component cycles may not stay coherent with the monotonicity of the edge labels (e.g. the $\mathbb{S}^2$-component in Example \ref{exe1}) and one may want to relabel the edges again.

We don't need this orientation any longer, moreover, to further test the orientability of the base surface of a marked graph diagram we will use different orientation.

\noindent{\bf From PD codes of diagrams to EPD codes of marked diagrams.}

We enhance a PD code to an \emph{EPD code} in which the letter $X$ may be replaced either by $Y$ or $Z$ with the convention presented on the right of Fig.\;\ref{pic003}. The number in column $DG$ and row $n$ of Table\;\ref{table1} counts the number of all created from the set $S$ marked diagrams (with $n$ crossings) without mirror images (with respect to classical and marked crossings) in amounts obtained by the equality $DG=S\cdot(4^{n-1}+2^{n-1})$.

We then select as a new set $S_4$ only those marked graphs diagrams from the set $S_3$ with trivial both the positive resolution and the negative resolution (i.e. ch-diagrams). We test the triviality by using the Jones polynomial, which is a valid test for our purpose (i.e. for classical knots and links up to $12$ crossings, see \cite{DasHou97} and \cite{Thi01}). Each EPD code corresponds to the unique surface-link type and every type of a surface-link is represented by an EPD code in our results of the enumeration (as shares the same shadow as some classical diagram).

We will call a \emph{code-crossing} the quadruple from an EPD code with the appropriate letter $X, Y$ or $Z$ as a \emph{name} of this code-crossing, the integers from an EPD code we will call \emph{code-vertices}.

\section{Hard unknots and unlinks}

We start from a definition of surface-unlinks and their standard marked graph diagrams.

\begin{definition}
An orientable surface-link in $\mathbb{R}^4$ is \emph{unknotted} if it is equivalent to a surface embedded in $\mathbb{R}^3\times\{0\}\subset\mathbb{R}^4$. A marked graph diagram for an unknotted \emph{standard sphere} is shown in Fig.\;\ref{m005}(a), an unknotted \emph{standard torus} is in Fig.\;\ref{m005}(d). An embedded projective plane $\mathbb{P}^2$ in $\mathbb{R}^4$ is \emph{unknotted} if it is equivalent to a surface whose marked graph diagram is an unknotted \emph{standard projective plane}, which looks like in Fig.\;\ref{m005}(b) that is a \emph{positive} $\mathbb{P}^2_+$ or looks like in Fig.\;\ref{m005}(c) that is a \emph{negative} $\mathbb{P}^2_-$. These are inequivalent surfaces and are mirror images to one another. A non-orientable surface embedded in $\mathbb{R}^4$ is \emph{unknotted} if it is equivalent to some finite connected sum or split unions of unknotted projective planes.

By a \emph{standard surface-unlink marked diagram} we mean the diagram obtained by taking connected sums or split unions (in the plane) of the standard sphere or the standard tori or the standard projective plane diagrams.
\end{definition}

\begin{figure}[ht]
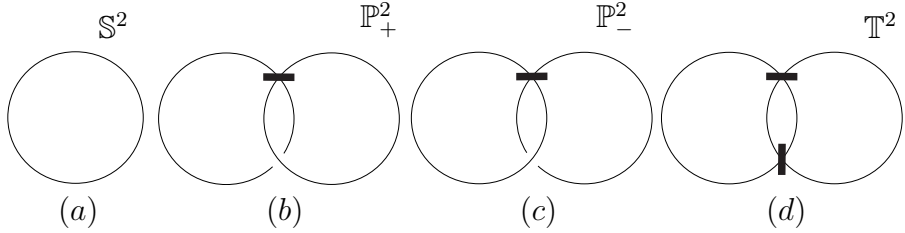

\begin{center}
\begin{lpic}[b(0.7cm),t(0.7cm)]{./PICTURES/m005(11.9cm)}
  \lbl[b]{18,25;$\mathbb{S}^2$}
  \lbl[b]{63,25;$\mathbb{P}^2_+$}
	\lbl[b]{103,25;$\mathbb{P}^2_-$}
  \lbl[b]{148,25;$\mathbb{T}^2$}
  \lbl[t]{12,-2;$(a)$}
  \lbl[t]{47,-2;$(b)$}
	\lbl[t]{90,-2;$(c)$}
  \lbl[t]{132,-2;$(d)$}
	\end{lpic}
\caption{Examples of the unknotted surfaces.\label{m005}}
\end{center}
\end{figure}

\begin{definition}
A non-split ch-diagram (on a sphere) is \emph{hard} if one cannot make on it any Yoshikawa move that doesn't increase the number of crossings (classical or marked) and is not the standard surface-unlink diagram.
\end{definition}

\begin{example}\label{exe1}
An EPD code for the marked graph diagram presented in Fig.\;\ref{M_1_C_10_K_1} is 
$X[1, 5, 2, 4], X[18, 10, 19, 1], Y[5, 19, 6, 20], X[14, 2, 15, 3],$\\ 
$X[3, 13, 4, 14], X[17, 12, 18, 13], $$X[9, 6, 10, 7], X[20, 16, 17, 15],$\\ 
$X[7, 12, 8, 11], X[16, 9, 11, 8]$, it turns out to be the minimal hard prime diagram of $\mathbb{S}^2\sqcup\mathbb{P}^2$.
\end{example}

\begin{figure}[ht]
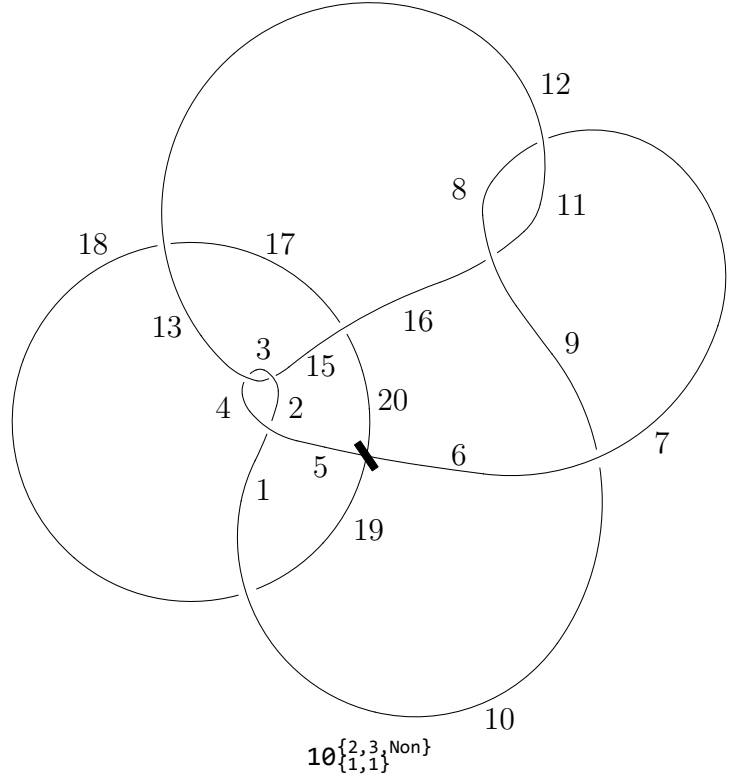

\begin{center}
\begin{lpic}[]{./PICTURES/M_1_C_10_K_1(9.5cm)}
  \lbl[l]{30,35;1}
	 \lbl[l]{34,45;2}
	 \lbl[b]{31,51;3}
	 \lbl[r]{27,45;4}
	  \lbl[t]{38,39;5}
	 \lbl[b]{55,38;6}
	 \lbl[t]{80,42;7}
	 \lbl[r]{56,72;8}
	  \lbl[l]{68,53;9}
	 \lbl[t]{60,8;10}
	  \lbl[l]{67,70;11}
	 \lbl[l]{65,85;12}
	 \lbl[r]{21,55;13}
		  \lbl[t]{38,51.5;15}
	  \lbl[t]{50,57;16}
			 \lbl[b]{33,64;17}
	 \lbl[b]{10,64;18}
		  \lbl[l]{42,30;19}
	  \lbl[l]{45,46;20}
	\end{lpic}
\caption{A hard prime marked diagram.\label{M_1_C_10_K_1}}
\end{center}
\end{figure}

Notice that we have already generated the set $S_4$ of diagrams that don't have opportunity to make on it any of the reducing $\Omega_1$, $\Omega_6$, $\Omega_6'$ type moves, so we proceed to translate the other Yoshikawa generating moves.

\noindent{\bf Zero markers case.}

The Yoshikawa moves without marked vertices are the moves of type $\Omega_1, \Omega_2, \Omega_3$, i.e. the standard Reidemeister moves.

\begin{proposition}
The opportunity to make $\Omega_2$ type move in a diagram from the set $S_4$ is equivalent to its EPD code having the following property. There exists a pair of distinct code-edges such that they are both in exactly two different code-crossings and their positions in code-crossings are ($\{2,2\}$ and $\{1,3\}$) or ($\{1,3\}$ and $\{4,4\}$) and the code-crossing names for that two crossings are both $X$.
\end{proposition}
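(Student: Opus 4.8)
The plan is to translate the geometric content of a crossing-decreasing $\Omega_2$ move (Fig.\;\ref{pic002}) into the combinatorial data of the EPD code, and then to read off the position patterns by a direct local analysis. First I would recall that $\Omega_2$ in its reducing direction removes a \emph{bigon}: a pair of distinct classical crossings $c_1\neq c_2$ joined by two edges that cobound an empty $2$-gon face, such that one strand passes over the other at \emph{both} crossings. The rival configuration, a clasp (in which the over/under roles are exchanged between $c_1$ and $c_2$), is exactly the non-removable case. Since any reducing $\Omega_2$ strictly lowers the crossing count, ``the opportunity to make an $\Omega_2$ move that does not increase the number of crossings'' is precisely the presence of a removable bigon. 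I would also note that, because the diagrams in $S_4$ are non-split, prime, and have no nugatory crossings, any two edges joining the same pair of crossings automatically cobound an \emph{empty} $2$-gon face: nothing can be trapped inside without producing a split component or a connected summand, both excluded. Hence the combinatorial condition ``a pair of distinct code-edges lying in exactly the same two code-crossings'' is equivalent to the presence of a $2$-gon face, and since $\Omega_2$ touches only classical crossings, both $c_1,c_2$ must carry the name $X$.

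Next I would set up the bookkeeping from the convention of Fig.\;\ref{pic003}: at $X[a,b,c,d]$ the under-strand occupies positions $1$ and $3$ (its incoming and outgoing edges) and the over-strand occupies positions $2$ and $4$, read counterclockwise from the incoming under-edge. Write $e_u$ and $e_o$ for the two bigon edges, with $e_u$ on the under-strand and $e_o$ on the over-strand. In a removable bigon the same strand is under at both crossings, so $e_u$ is an under-edge at $c_1$ and at $c_2$; being a single oriented edge it is outgoing under at one crossing and incoming under at the other, hence it carries position $3$ at one and position $1$ at the other, giving the multiset $\{1,3\}$. This step is routine; the crux is $e_o$.

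For $e_o$ I would argue that, relative to the under-strand oriented consistently through the bigon, $e_o$ sits on one fixed side (the bigon lies on the same side of the under-strand as one passes through $c_1$ and then $c_2$). Since the label $2$ versus $4$ of an over-edge records exactly on which side of the incoming under-edge it lies in counterclockwise order (position $2$ if the bigon is on the right, position $4$ if on the left), $e_o$ receives the \emph{same} label at both crossings, namely $\{2,2\}$ or $\{4,4\}$; the two alternatives correspond to the two orientations available for the under-strand, which I would confirm by computing both on the standard local model. Conversely, I would check that these patterns force the over/under assignment to be constant: in a clasp each bigon edge is an under-edge at one crossing and an over-edge at the other, so each would carry a position from $\{1,3\}$ at one crossing and from $\{2,4\}$ at the other, yielding mixed multisets such as $\{1,2\}$ or $\{3,4\}$ that never match the listed patterns; one also checks that the patterns themselves force the two edges to sit at adjacent slots at each crossing, so a genuine $2$-gon face is realized. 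The main obstacle is precisely this side-of-the-under-strand argument for $e_o$, where the counterclockwise convention, the planar embedding, and the orientation of the under-strand must be combined to exclude the mixed multiset $\{2,4\}$ and to separate the removable bigon cleanly from the clasp. Assembling the two directions then gives the stated equivalence.
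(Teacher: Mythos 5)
Your overall strategy---translating the reducing $\Omega_2$ move into the presence of a removable bigon and then reading off the position multisets from the convention of Fig.\;\ref{pic003}---is the right one, and it matches what the paper intends: the paper itself writes out no proof, saying only that the propositions ``can be checked by considering all possible combinatorial configurations.'' Your forward direction is correct and complete; in particular the observation that the label $2$ versus $4$ of an over-edge records the side of the oriented under-strand on which it lies, so that a removable bigon forces $\{2,2\}$ or $\{4,4\}$ for the over-edge and $\{1,3\}$ for the under-edge, is exactly the key computation.

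The gap is in the converse, and it sits precisely where you yourself located the crux. Your blanket claim that in a non-split, prime diagram ``any two edges joining the same pair of crossings automatically cobound an empty $2$-gon face'' is false: in the two-crossing clasp diagram of the unlink (which lies in $S_4$), the two edges of the over-strand share both crossings, yet the closed curve they form separates the two edges of the under-strand, so they cobound no face at all---and no split component or connected summand arises, because that curve passes through the crossings rather than meeting edges transversely. The weaker statement you then fall back on---that the patterns force the two edges to be adjacent at both shared crossings, ``so a genuine $2$-gon face is realized''---is also not a valid inference: two edges can be adjacent at both common crossings while the two corner faces lie on \emph{opposite} sides of the closed curve they form (each side then containing the remaining edges of one of the two crossings), in which case no bigon face exists; this is exactly the configuration in which the over-edge carries the mixed positions $\{2,4\}$. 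So excluding $\{2,4\}$ is not a side check to be disposed of after the face is found: the equal labels $\{2,2\}$ or $\{4,4\}$ are what force both corners onto the same side of the under-strand, and only then does the face adjacent to that side of the under-edge trace out and close up as a bigon (with connectedness of diagrams in $S_4$ ruling out split pieces floating inside it). Your side-of-the-under-strand argument supplies precisely this implication, but you deploy it only in the forward direction; the converse must run it in reverse---$\{2,2\}$ or $\{4,4\}$ $\Rightarrow$ same side at both crossings $\Rightarrow$ bigon face---rather than resting on the false bigon-detection lemma or on adjacency alone.
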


\begin{proposition}
The opportunity to make $\Omega_3$ type move in a diagram from the set $S_4$ is equivalent to its EPD code having the following property. There exists a triple of distinct code-edges such that they form a triangle on a sphere and their positions in the code-crossings (the vertices of the triangle) are ($\{1,3\}$, $\{4,4\}$ and $\{1,4\}$) or ($\{1,3\}$, $\{2,4\}$ and $\{3,4\}$) or ($\{1,3\}$, $\{2,2\}$ and $\{2,3\}$) or ($\{1,3\}$, $\{2,4\}$ and $\{1,2\}$) or ($\{1,3\}$, $\{4,4\}$ and $\{3,4\}$) or ($\{1,3\}$, $\{2,4\}$ and $\{2,3\}$) or ($\{1,3\}$, $\{2,2\}$ and $\{1,2\}$) or ($\{1,3\}$, $\{2,4\}$ and $\{1,4\}$) and the code-crossing names for that three crossings are all $X$.
\end{proposition}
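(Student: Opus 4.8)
The plan is to translate the geometric picture of an $\Omega_3$ (triangle) move into purely combinatorial conditions on the EPD code, exactly as the preceding proposition does for $\Omega_2$. Recall from Fig.\;\ref{pic003} that each code-crossing $X[a,b,c,d]$ records the four edge-labels at a crossing, read from the incoming lower strand $a$ counterclockwise through $b,c,d$; thus the \emph{position} of a code-edge within a code-crossing (a number in $\{1,2,3,4\}$) encodes the geometric role of that strand end at the crossing — whether it is an over- or under-strand and on which side. The $\Omega_3$ move requires three crossings meeting pairwise along three arcs that bound an (empty) triangular region on the sphere, with the strand-passing pattern making one strand slidable across the crossing of the other two.

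First I would set up the correspondence between ``triangle on the sphere'' and the combinatorial data: three distinct code-edges, each shared by exactly two of the three code-crossings, so that the incidence graph of the three crossings and three edges forms a cycle — this is the combinatorial shadow of a triangular face. Next, for each of the $2^3=8$ classical crossing-sign assignments (plus the planar placements) that yield a genuine $\Omega_3$ configuration, I would compute which positions the three shared edges must occupy in their two respective code-crossings. Since the statement enumerates exactly eight position-triples, each ending in the characteristic ``$\{1,3\}$'' slot for the strand that slides (the over-strand passing through, contributing positions $1$ and $3$ at its crossings), I expect a case-by-case verification that each listed triple corresponds to one admissible oriented $\Omega_3$ picture and conversely. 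The constraint that all three names be $X$ reflects that an $\Omega_3$ move involves only classical crossings, never marked vertices $Y,Z$.

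The key steps, in order, are: (1) fix the normalization convention (from Fig.\;\ref{pic003}) relating position-in-code to geometric strand role; (2) enumerate the geometrically distinct $\Omega_3$ triangles up to the symmetries already quotiented out in forming $S_4$ (mirror and rotation), tracking which strand is the sliding one; (3) for each, read off the induced position pattern of the three shared edges, producing the eight triples; and (4) argue the converse — that any code exhibiting one of these eight patterns, together with the triangle incidence and the all-$X$ condition, actually admits the move, by reconstructing the local diagram from the code and checking the triangular region is empty (no fourth strand crosses it). Steps (1)–(3) are bookkeeping: once the convention is pinned down, reading positions off each of the eight oriented pictures is mechanical.

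\textbf{The main obstacle} will be step (4), the converse direction, and specifically verifying \emph{emptiness} of the triangular region from the code alone. The position data guarantees the correct local crossing pattern among the three crossings, but an $\Omega_3$ move is blocked if another strand of the diagram passes through the putative triangle; I must argue that the hypotheses — the three edges bounding a genuine face of the spherical partition underlying $S_4$ (so the region they enclose is a single quadrilateral-derived face with no interior crossings), together with the $\{1,3\}$ slot forcing the through-strand to be uninterrupted — preclude any obstructing strand. This is where the global structure of the shadows in $S_4$ (quadrangulations, $2$-connected, no nugatory crossings) does real work, as it ensures the triangle is a bona fide face rather than merely a combinatorial cycle, so that ``triangle on a sphere'' in the hypothesis is exactly the embedded, empty triangle the move demands.
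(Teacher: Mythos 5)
Your approach coincides with the paper's: the paper gives no written proof of this proposition, asserting only that it (like the neighbouring ones) ``can be checked by considering all possible combinatorial configurations,'' which is precisely the case-enumeration you outline, and your concern about emptiness of the triangular region is indeed settled by the structure of the diagrams underlying $S_4$ (the shadows are connected, and the listed position pairs force the two triangle edges to be cyclically adjacent at each of the three crossings, so no strand can enter the enclosed region). One slip to correct when you carry out your step (1): in the convention of Fig.\;\ref{pic003}, positions $1$ and $3$ are the incoming and outgoing ends of the \emph{lower} strand, so the edge with position pair $\{1,3\}$ (which is listed \emph{first}, not last, in each triple) is the strand passing \emph{under} both others, not the over-strand.
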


Other interpretation of the relationship between Reidemeister moves and PD codes one can find in \cite{Mat15}. Consider the case of ch-diagrams with zero markers, it is the case of trivial $2$-links because any other base surface beside $\mathbb{S}^2$ must have at least one marked vertex in any of its ch-diagram and that follows from the fact that each of its connected components satisfies the inequality: $\text{the number of marked vertices}\geq 2-\text{Euler characteristic}$ (see \cite{Jab16}).

This family is equivalent to the classical case of unlinked circles in the $3$-space (with generic diagrams on a $2$-sphere). Therefore, in this case we send the reader to the other papers concerning this topic, for example \cite{KauLam11}. Our computational results extend the known results for the classical case of minimal hard prime diagrams as follows.

\begin{theorem}
Up to mirror image, the only minimal hard prime classical unlink diagram with two components is $8^{\{2,4,Ori\}}_{\{0,1\}}$, and the only minimal hard prime classical unlink diagram with three components is $12^{\{3,6,Ori\}}_{\{0,424\}}$ (see Fig.\;\ref{hard08}). Furthermore, the only minimal hard prime classical unknot diagrams are the four diagrams shown in Fig.\;\ref{hard09}.
\end{theorem}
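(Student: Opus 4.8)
The plan is to carry out an exhaustive, computer-assisted enumeration, since the assertion is a finiteness-and-uniqueness result for the zero-marker (classical) stratum of the diagrams generated in Section 3. First I would run the generation pipeline for every crossing number $n$ from the relevant minimum up to $12$: produce the set $S_2$ of prime, reduced, connected shadows by dualizing the quadrangulations output by \emph{plantri}, pass to the PD codes of $S_3$, and retain only the all-$X$ (zero-marker) codes, since by the remarks preceding the statement the classical unlink diagrams are exactly the ch-diagrams with no marked vertices. Because these shadows carry no kinks, loops, nugatory crossings, or connected-sum decompositions, every diagram in this family is automatically prime and non-split and admits no reducing $\Omega_1$ move; this is precisely why the set $S_4$ was already constructed to exclude $\Omega_1$-type reductions.

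Second, for each such diagram I would decide triviality of the underlying classical link by computing its Jones polynomial, which is a complete triviality detector in this crossing range by the results cited in Section 3, and keep only the trivial ones. Third, on each surviving diagram I would test hardness directly against the two combinatorial criteria of the preceding Propositions: a diagram fails to be hard precisely when its EPD code realizes one of the listed $\Omega_2$-patterns (a pair of code-edges occupying positions $(\{2,2\},\{1,3\})$ or $(\{1,3\},\{4,4\})$ with both code-crossing names equal to $X$) or one of the eight listed $\Omega_3$ triangle-patterns. Since an $\Omega_2$ move reduces the crossing count and an $\Omega_3$ move preserves it, ruling out both of these, together with the absence of $\Omega_1$ from the first step, is exactly the hardness condition of the Definition in the split-free classical case.

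Sorting the resulting hard prime diagrams by component number and by crossing number then yields the classification. Reading off the smallest entry in each column gives the two-component example at $n=8$, the three-component example at $n=12$, and the four unknot diagrams at the bottom of the one-component stratum. Uniqueness up to mirror image is inherited from the construction itself: the sets $S_1$ and $S_3$ are built modulo mirror images, so each mirror class is represented exactly once, and the totals recorded in Table \ref{table1} track these counts.

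The main obstacle I anticipate is not any single verification step but the need to guarantee completeness and minimality at the same time. Completeness rests on the assertion that dualizing the simple, at-least-$2$-connected quadrangulations of the sphere with all vertices of degree $\geq 2$ recovers every prime, reduced, connected link shadow without omission; any gap there could conceal a smaller hard diagram and invalidate the minimal count. Minimality in turn demands certifying that the search at each $n$ strictly below the stated threshold produces no hard prime diagram of the relevant type, and this is the part most sensitive to an error in the $\Omega_2$/$\Omega_3$ pattern matching: if the criteria of the Propositions were to miss even one reducing configuration, a genuinely reducible diagram could be misclassified as hard and corrupt the tables. I would therefore treat the two Propositions as the linchpin of the whole argument and cross-check the flagged reductions on a sample of diagrams by an independent, Gauss-code-based Reidemeister search before trusting the enumeration totals.
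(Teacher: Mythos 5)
Your pipeline (plantri quadrangulations, dual shadows, all-$X$ PD codes, Jones-polynomial triviality test, $\Omega_1/\Omega_2/\Omega_3$ pattern matching for hardness) is exactly the paper's generation method, and up to that point the proposal is sound. The genuine gap is your final claim that uniqueness up to mirror image is ``inherited from the construction itself.'' It is not, and the paper says so explicitly: distinct codes in the enumeration can present the same diagram up to spherical isotopy or mirror reflection (see the Remark exhibiting $6^{\{1,0,Non\}}_{\{4,6\}}$, $6^{\{1,0,Non\}}_{\{4,7\}}$, $6^{\{1,0,Non\}}_{\{4,8\}}$ as one and the same diagram), because excluding mirrors at the level of codes does not account for symmetries of the underlying shadow. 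Concretely, column $M_0$ of Table~\ref{table3} records $2$ hard classical diagrams at $8$ crossings and $7$ at $9$ crossings (six of them unknots), whereas the theorem asserts a single $8$-crossing two-component unlink diagram and exactly four $9$-crossing unknot diagrams; if your step ``each mirror class is represented exactly once'' were correct, the theorem's counts would have to equal the table's, and they do not.

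Consequently, the paper's proof spends essentially all of its effort on the step you skip: (i) identifying which of the generated diagrams coincide up to spherical isotopy or mirror reflection, and (ii) certifying that the survivors are pairwise distinct. Part (ii) cannot be read off from the enumeration at all --- it requires invariants. The paper uses the number of $5$-gons (unchanged by spherical isotopy and by mirroring) to separate $9^{\{1,2,Ori\}}_{\{0,1\}}$, $9^{\{1,2,Ori\}}_{\{0,4\}}$ and $9^{\{1,2,Ori\}}_{\{0,5\}}$, and it uses the writhe to show that $9^{\{1,2,Ori\}}_{\{0,5\}}$ and $9^{\{1,2,Ori\}}_{\{0,6\}}$ are not mirror images (both have the same nonzero writhe, while mirroring negates it), together with a $5$-gon count ruling out a spherical isotopy between them. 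You would need to add both (i) and (ii) --- or an equivalent canonical-form computation for diagrams on the sphere modulo mirror symmetry --- before your enumeration can justify the words ``the only'' and ``the four diagrams'' in the statement; as written, your procedure terminates with the wrong totals.
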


\begin{figure}[ht]
\includegraphics[width=0.495\textwidth]{./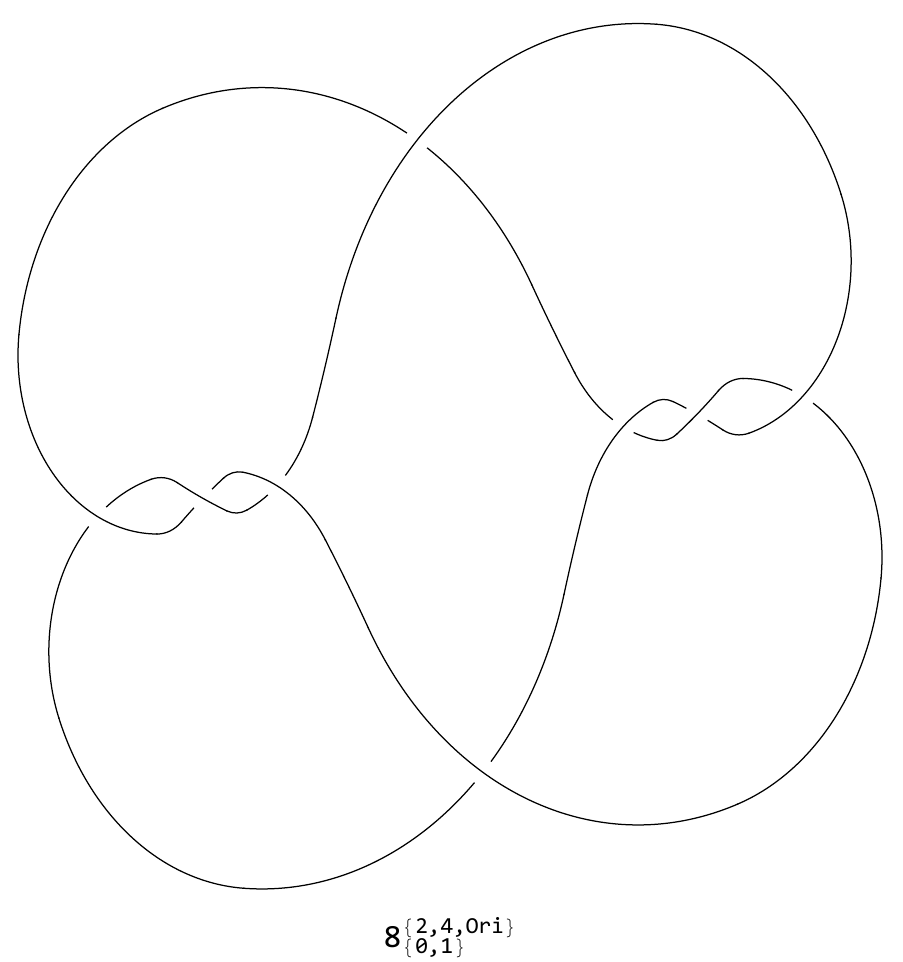}
\includegraphics[width=0.495\textwidth]{./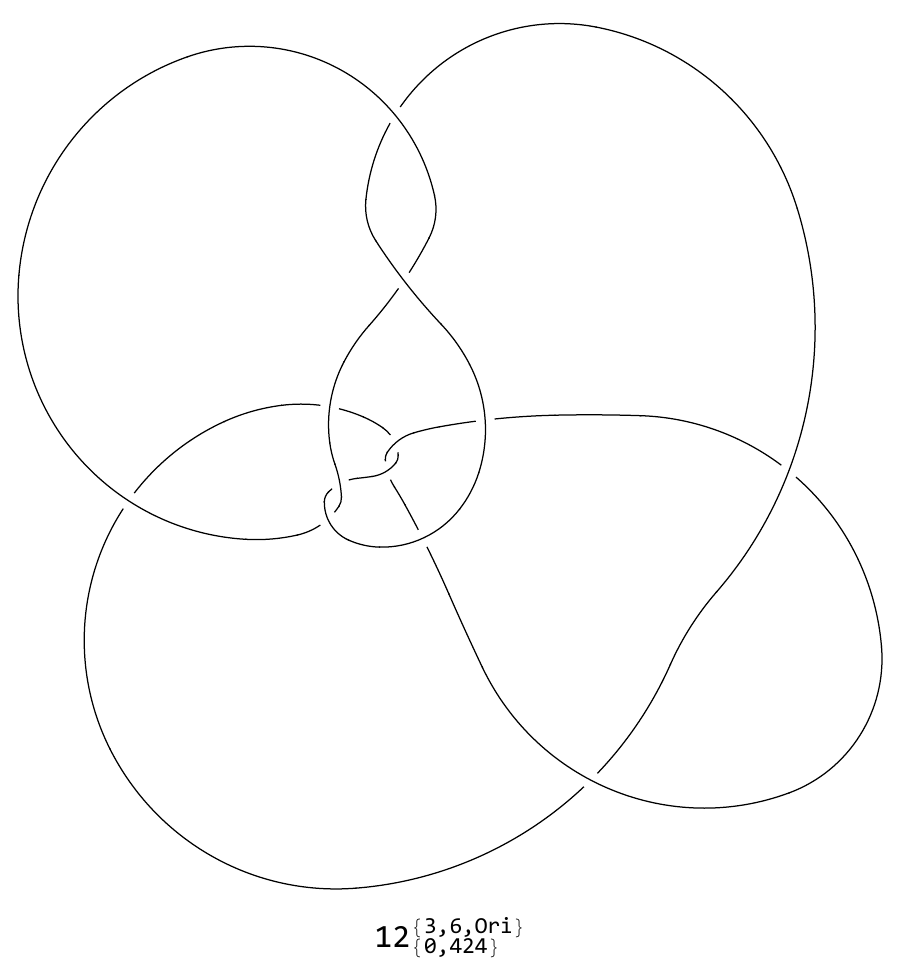}
\caption{The minimal hard prime two and three component unlink diagram.\label{hard08}}
\end{figure}

\begin{figure}[ht]
\includegraphics[width=0.495\textwidth]{./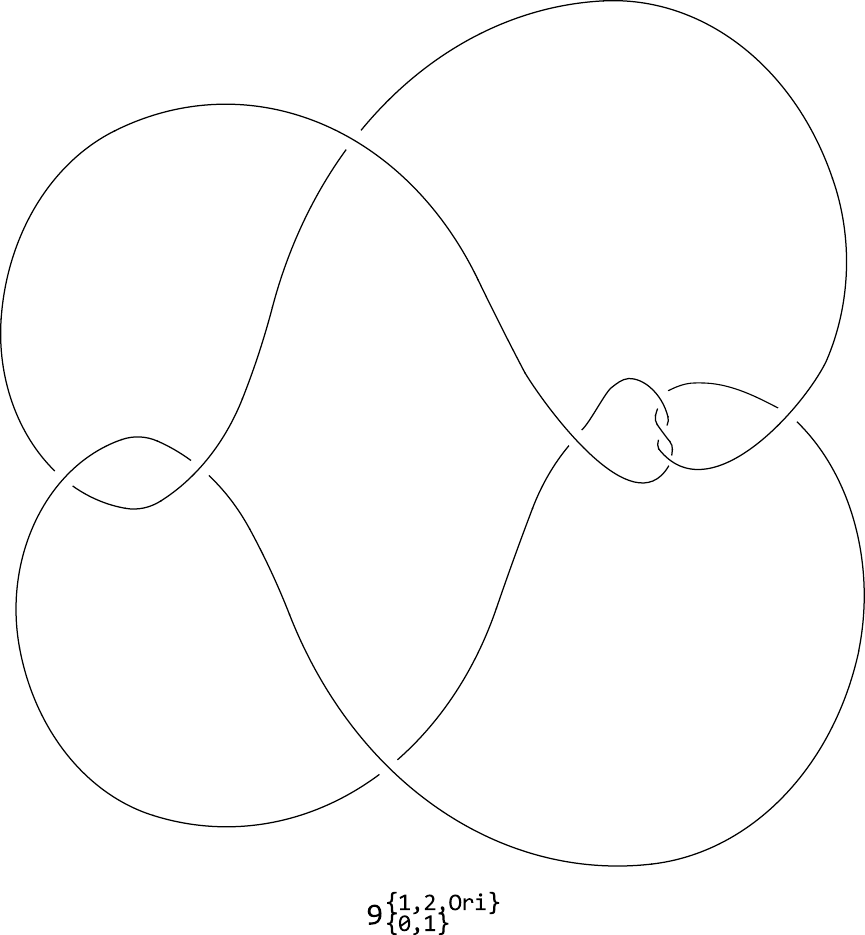}
\includegraphics[width=0.495\textwidth]{./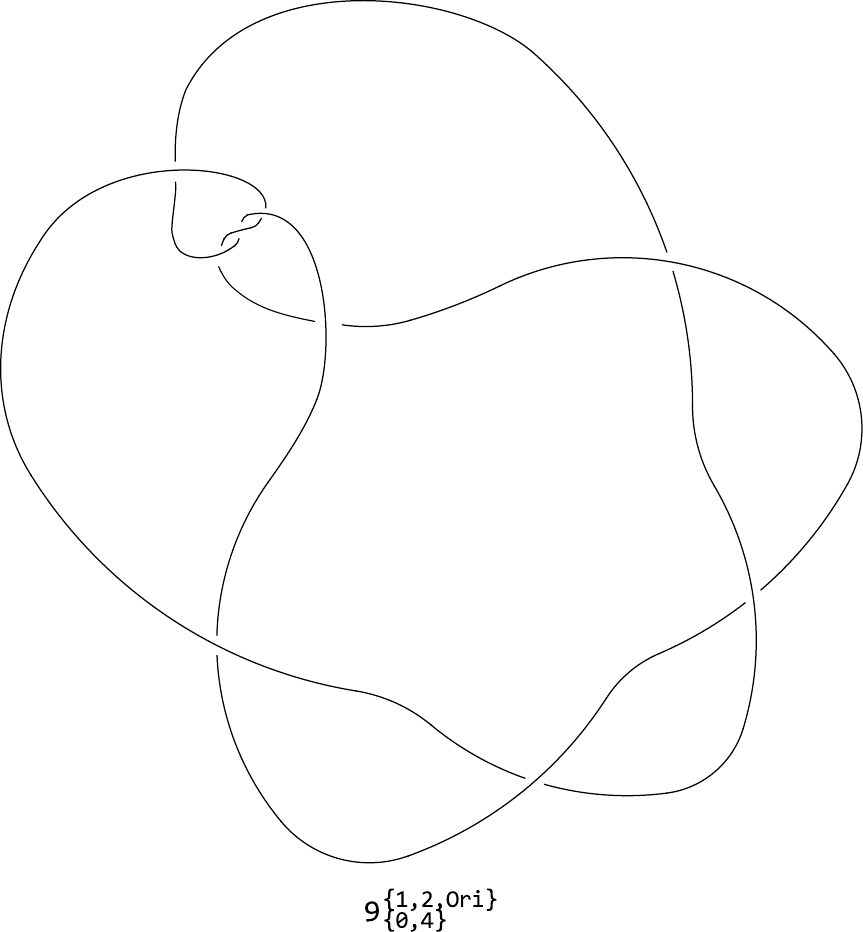}
\includegraphics[width=0.495\textwidth]{./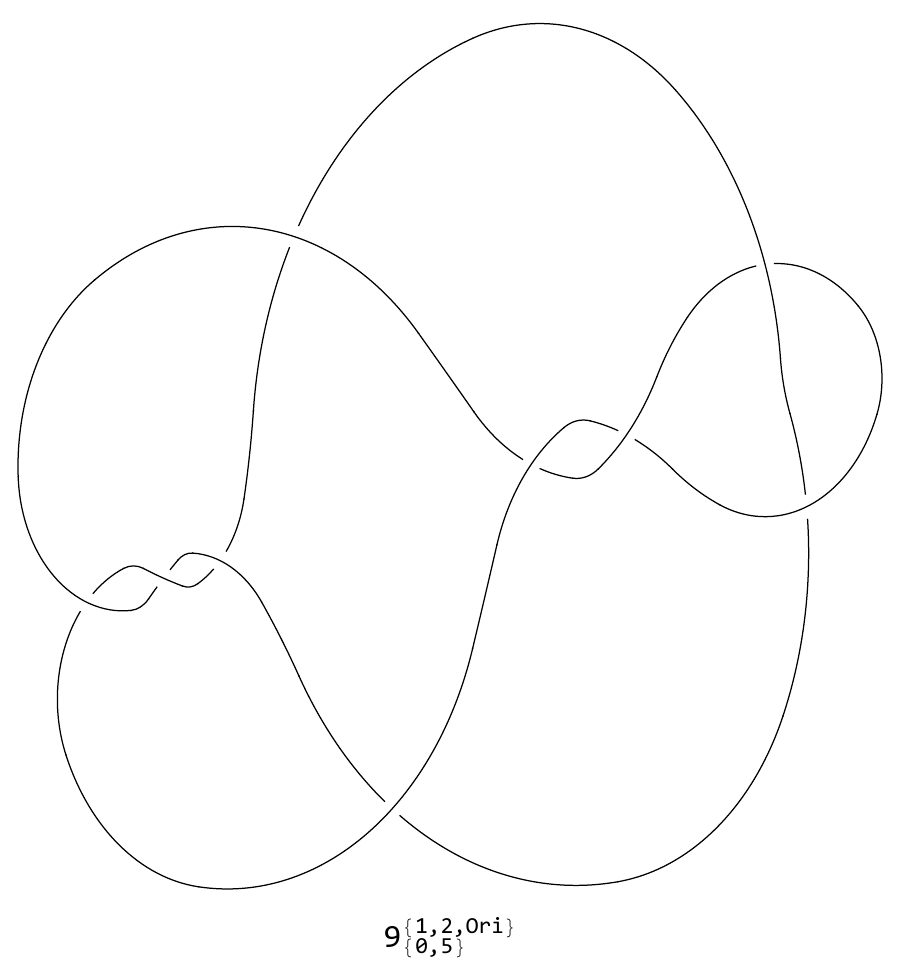}
\includegraphics[width=0.495\textwidth]{./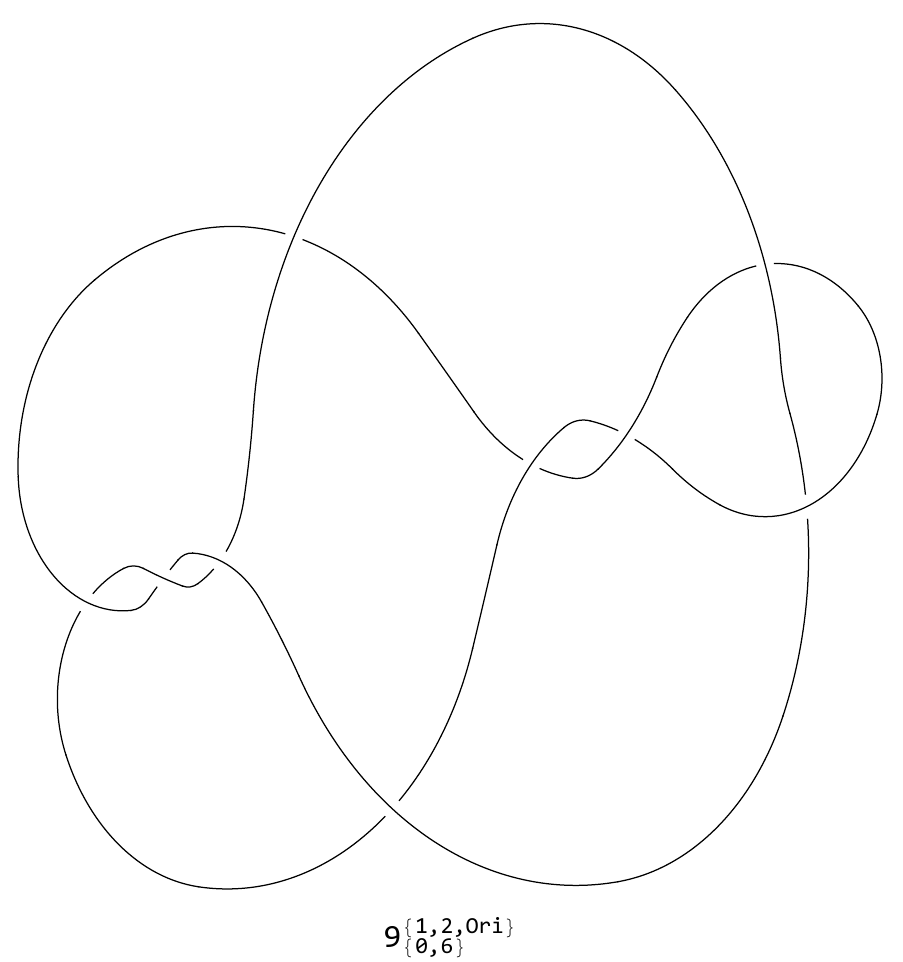}
\caption{Minimal hard prime unknot diagrams.\label{hard09}}
\end{figure}

\begin{proof}
The computation results by the described method produced hard classical diagrams (with $n$ crossings) in the amounts presented in column $M_0$ of Table\;\ref{table3}. We then identify which two among them are related by spherical isotopy or mirror reflection. There were two cases to consider for $8$-crossing diagrams, six diagrams for the unknot for $9$-crossing diagrams and only one diagram for three component unlink. One can see that the diagrams $9^{\{1,2,Ori\}}_{\{0,5\}}$ and $9^{\{1,2,Ori\}}_{\{0,6\}}$ cannot be transformed one to the other neither by a spherical isotopy because they have only one $5$-gon (so we cannot transform this region to some other) nor by a mirror reflection because they both have the same nonzero writhe. The diagrams $9^{\{1,2,Ori\}}_{\{0,1\}}$, $9^{\{1,2,Ori\}}_{\{0,4\}}$ and $9^{\{1,2,Ori\}}_{\{0,5\}}$ are pairwise distinct because the first one has zero $5$-gons and the second has two $5$-gons.
\end{proof}

\noindent{\bf One marker case.}

The Yoshikawa moves with exactly one marked vertex involved are the moves of type $\Omega_4, \Omega_4', \Omega_5, \Omega_6, \Omega_6'$.

\begin{proposition}
The opportunity to make $\Omega_4$ or $\Omega_4'$ type move in a diagram from the set $S_4$ is equivalent to its EPD code having the following property. There exists a triple of distinct code-edges such that they form a triangle on a sphere and exactly one of the code-crossing names for that three crossings (vertices of that triangle) are not $X$ and their positions in the code-crossings, starting from two edges that meet in the vertex named $X$ are ($\{1,3\}$, $\{4,4\}$ and $\{1,4\}$) or ($\{1,3\}$, $\{2,4\}$ and $\{3,4\}$) or ($\{1,3\}$, $\{2,2\}$ and $\{2,3\}$) or ($\{1,3\}$, $\{2,4\}$ and $\{1,2\}$) or ($\{1,3\}$, $\{4,4\}$ and $\{3,4\}$) or ($\{1,3\}$, $\{2,4\}$ and $\{2,3\}$) or ($\{1,3\}$, $\{2,2\}$ and $\{1,2\}$) or ($\{1,3\}$, $\{2,4\}$ and $\{1,4\}$).
\end{proposition}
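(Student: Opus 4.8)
The plan is to establish the equivalence by translating the geometric condition for an $\Omega_4$ or $\Omega_4'$ move into the combinatorial language of EPD codes, mirroring the structure of the proofs of Propositions for $\Omega_2$ and $\Omega_3$. First I would recall that an $\Omega_4$ (respectively $\Omega_4'$) move involves passing a strand over (respectively under) a marked vertex, which locally sees a triangle whose three vertices consist of exactly one marked vertex and two classical crossings of the same type. Since a strand slides over the marked vertex, the two classical crossings bounding the passing arc must both be of the same handedness, which in the EPD code means they carry the name $X$ (after normalizing by the mirror/switch reductions recorded in the dependency list, so that the canonical representative uses $X$ at the classical corners). The marked vertex itself is the third corner, whose name is therefore $Y$ or $Z$, i.e. ``not $X$'', which is precisely the stated condition that exactly one of the three names is not $X$.

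Next I would identify the triangular region on the sphere. As in the $\Omega_3$ proposition, three distinct code-edges forming a triangle correspond to three mutually adjacent code-crossings sharing pairwise a common edge, and the eight admissible position-pattern triples listed are exactly the eight that arise for the $\Omega_3$ move; the key observation is that the $\Omega_4$/$\Omega_4'$ configuration has the same underlying triangular incidence pattern as $\Omega_3$, except that one of the three crossings is a marked vertex rather than a classical crossing. I would verify that the position triples $(\{1,3\},\{4,4\},\{1,4\})$, etc., encode precisely the incidence of the sliding strand with the two same-type classical crossings at the specified corners, reading the positions in the quadruple $[a,b,c,d]$ according to the convention of Fig.\;\ref{pic003}. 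The phrase ``starting from two edges that meet in the vertex named $X$'' fixes the base point of the enumeration so that the position patterns are unambiguous.

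For the converse direction, I would argue that any EPD code exhibiting one of the eight position-pattern triples with exactly one non-$X$ name at a triangular face forces the diagram to contain, up to planar isotopy and the switch/mirror dependencies, the local picture in which a strand can be slid across the marked vertex; hence the $\Omega_4$ or $\Omega_4'$ move is available. Here I would invoke the dependency list (from \cite{JKL15}), in particular that the switch move to $\Omega_4$ and to $\Omega_4'$ is generated by $\Omega_2, \Omega_4$ and $\Omega_2, \Omega_4'$ respectively, to justify that it suffices to detect the canonical orientation of the marked vertex and that both vertex types are captured by the single name condition.

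The main obstacle will be the careful bookkeeping of the eight position-pattern triples: I must confirm that this list is complete and non-redundant, covering every way the sliding strand and the two classical crossings can sit around the triangular face up to the symmetries already quotiented out (planar isotopy, and the mirror/switch reductions of Fig.\;\ref{pic002}). In particular I expect the delicate point to be checking that fixing the starting vertex as the one named $X$, rather than the marked vertex, yields exactly the same eight triples as in the $\Omega_3$ case and does not silently double-count or omit a handedness configuration; this requires a systematic case check of the corner-position conventions in Fig.\;\ref{pic003} against the local models in Fig.\;\ref{pic002}, which I would carry out but not reproduce in full.
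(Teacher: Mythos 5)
Your overall skeleton --- locate a triangular face whose three corners are one marked vertex and two classical crossings, then verify the eight position triples by exhaustive case analysis --- is in the same spirit as the paper, which in fact offers no detailed argument at all: it merely remarks that the propositions ``can be checked by considering all possible combinatorial configurations.'' However, your proposal rests on a faulty dictionary between the geometry and the EPD code, and that is precisely where the case check would fail. You claim that the two classical crossings ``must both be of the same handedness, which in the EPD code means they carry the name $X$ (after normalizing by the mirror/switch reductions).'' That is not what the letters encode: in an EPD code \emph{every} classical crossing is named $X$ by definition, while $Y$ and $Z$ are the two marker types of a marked vertex; no normalization is involved, and none is available --- the dependency list from \cite{JKL15} relates moves to one another, it is not an operation applied to a fixed diagram's code while testing whether a move is available on that diagram. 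The over/under information that distinguishes $\Omega_4$ from $\Omega_4'$ (the sliding strand passing over both strands of the marked vertex, or under both) is encoded entirely in the \emph{positions}: by the convention of Fig.~\ref{pic003} the under-strand of an $X$-crossing occupies positions $1$ and $3$ of the quadruple and the over-strand positions $2$ and $4$. If you carried out your ``systematic case check'' believing that handedness lives in the letter rather than in the positions, you could neither derive nor verify the eight listed triples.

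For the same reason, your appeal to the switch-move dependencies in the converse direction is misplaced: both marker types are covered simply because ``not $X$'' means $Y$ or $Z$, and the proposition asserts the opportunity to make the move directly on the diagram, not up to auxiliary moves. Your observation that the eight triples coincide with those in the $\Omega_3$ proposition is correct and genuinely useful, but it too needs the corrected dictionary to become an argument: the coincidence holds because in both moves the constraint is that the sliding edge sits in positions $\{2,4\}$ (over) or $\{1,3\}$ (under) at its two classical endpoints, while the third corner --- whether an $X$ crossing or a $Y$/$Z$ marked vertex --- contributes only the rotational adjacency of the two remaining edges, which is insensitive to its name. With that repair, your plan does reduce to the same finite combinatorial verification the paper invokes.
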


\begin{proposition}
The opportunity to make $\Omega_5$ type move in a diagram from the set $S_4$ is equivalent to its EPD code having the following property. There exists a pair of distinct code-edges such that they are both in exactly two different code-crossings and exactly one of the code-crossing names from that two crossings are $X$.
\end{proposition}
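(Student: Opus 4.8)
The plan is to follow the template already set by the $\Omega_2$ proposition, reading the local picture of $\Omega_5$ off Figure~\ref{pic002} and transcribing it into the EPD code. As drawn there, $\Omega_5$ is the marked-vertex analogue of the Reidemeister~II move $\Omega_2$: its support is a $2$-gon on the sphere, one of whose two corners is a classical crossing and the other a marked vertex. I would therefore first show that an opportunity to apply $\Omega_5$ forces two distinct code-edges bounding such a bigon, and read the corner types off the code-crossing names; the reverse implication then amounts to checking that every coded bigon of this shape genuinely admits the move.

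For the bigon itself I would reuse verbatim the dictionary established for $\Omega_2$: two distinct code-edges each lying in exactly the same two code-crossings is precisely the statement that they are the two sides of an embedded $2$-gon region whose corners are those two shared crossings. Here the construction of $S_4$ does the geometric bookkeeping for free, since its diagrams come from shadows that are at least $2$-connected and contain no nugatory crossings, so a combinatorial bigon cannot be a degenerate self-touching or wrap-around region and is a genuine disk. The clause that \emph{exactly one} of the two shared code-crossing names is $X$ then says precisely that one corner is a classical crossing and the other a marked vertex (name $Y$ or $Z$), which is the defining local feature of $\Omega_5$. This settles the necessity direction.

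The one genuinely new point, and the step I expect to be the main obstacle, is sufficiency — explaining the \emph{absence} of any position constraint, in sharp contrast with $\Omega_2$, and with $\Omega_4$ whose triangle inherits the eight position patterns of $\Omega_3$. For $\Omega_2$ the two patterns $(\{2,2\},\{1,3\})$ and $(\{1,3\},\{4,4\})$ are forced because only in those relative positions do two genuine crossings bound a removable, rather than clasped, bigon. When one corner is instead a marked vertex there is no over/under datum at that corner to obstruct the move, and its marker governs only how the two strands leaving the bigon are rejoined on the far side — data irrelevant to removing the bigon locally. I would make this precise by enumerating the finitely many local configurations (the two marker types, the two crossing signs, and the ways the two edges fill the slots at each corner) and exhibiting in each the relevant $\Omega_5$ move. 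The mirror variant accounts for the crossing sign and is admissible since the hardness definition permits mirror moves; where the marker type appears to matter, the dependency recorded from \cite{JKL15}, that the switch move of $\Omega_5$ is generated by $\Omega_1,\Omega_2,\Omega_3,\Omega_4,\Omega_4',\Omega_5$, certifies that the two cases are not genuinely distinct. Consequently neither marker type, crossing sign, nor slot pattern constrains the move, and the position cases that restrict $\Omega_2$ collapse to the single clause stated.

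The delicate part of this enumeration, where I would spend the most care, is guaranteeing that the variant covering each configuration is a \emph{single} non-increasing move, as the hardness definition demands, rather than a longer equivalence. Concretely I would check that both marker types are already handled by one application of $\Omega_5$ or its mirror, so that the derived switch identity from \cite{JKL15} enters only as bookkeeping — certifying that the remaining cases coincide with ones already covered — and is never itself the reducing move. I would also confirm that the resulting move is a legitimate move of marked graph diagrams, respecting the marker data so that both resolutions stay trivial link diagrams throughout, which is what keeps the manipulation inside the class of ch-diagrams.
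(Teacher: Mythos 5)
Your overall strategy (read $\Omega_5$ off the figure as a bigon whose two corners are one classical crossing and one marked vertex, then enumerate the finitely many local configurations) is exactly the level of argument the paper itself offers: it gives no individual proofs, saying only that these propositions ``can be checked by considering all possible combinatorial configurations.'' Your remark that $2$-connectivity and primeness of the shadows underlying $S_4$ is what guarantees a coded bigon is a genuine face is a worthwhile point the paper leaves implicit. The gap is in your treatment of the marker direction. You propose to ``check that both marker types are already handled by one application of $\Omega_5$ or its mirror''; that check fails. Up to planar isotopy there are two inequivalent ways the marker can sit relative to the bigon (pointing into the bigon region, or into the two side regions); the move $\Omega_5$ preserves this relation between its two sides (it passes the twist through the vertex, leaving the marker fixed), and the classical mirror only flips the crossing sign. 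So $\Omega_5$ together with its mirror covers exactly one of the two marker configurations; the other one is precisely the switch configuration. The dependency you cite from \cite{JKL15} in fact shows the opposite of what you want: the switch of $\Omega_5$ requires $\Omega_1$, $\Omega_2$, $\Omega_3$, $\Omega_4$, $\Omega_4'$, $\Omega_5$ — a genuine composite that passes through diagrams with more crossings (note the presence of $\Omega_1$, and contrast with the mirror of $\Omega_5$, which needs only $\Omega_2$, $\Omega_5$). It therefore cannot be downgraded to ``bookkeeping certifying that the remaining cases coincide with ones already covered,'' and your own delicacy criterion — every configuration must be resolved by a \emph{single} non-increasing move — is violated by your argument.

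The way to close the gap is definitional rather than derivational: the switch of $\Omega_5$ is itself a local transformation that preserves the surface-link type (this is what the cited dependency proves) and preserves the total crossing count, and the paper's phrases ``$\Omega_5$ type move'' and ``Yoshikawa move'' in the hardness definition must be read as admitting switches as single moves, just as they admit mirrors. Under that reading the four local configurations (two marker relations times two crossing signs) are covered one-for-one by $\Omega_5$, its mirror, its switch, and its switch-mirror, which is why the statement correctly carries no position constraints, in contrast with $\Omega_2$ and $\Omega_7$. Without that reading the stated equivalence would actually be false: a diagram whose only such bigon sits in the switch configuration would satisfy the coded condition while admitting no single non-increasing move from the set ``Figure \ref{pic002} plus mirror moves.''
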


In the case of ch-diagrams with exactly one marker we have surface-links that must have the base surface components being either $\mathbb{S}^2$ or $\mathbb{P}^2$ and they are all surface-unlinks \cite{Yos94}.

\begin{table}[ht]
\caption{Numerical results of our computation.\label{table3}}
{\begin{tabular}{@{}l|lllllllllll@{}}
	$n$&$M_0$&$M_1$&$M_2$&$M_3$&$M_4$&$M_5$&$M_6$&$M_7$&$M_8$&$M_9$&$M_{10}$\\
	\hline
		3&0&0&0&0&-&-&-&-&-&-&-\\
		4&0&0&0&0&1&-&-&-&-&-&-\\
		5&0&0&0&0&1&0&-&-&-&-&-\\
		6&0&0&0&0&8&0&2&-&-&-&-\\
		7&0&0&2&0&6&3&5&0&-&-&-\\
		8&2&0&7&4&55&11&53&2&6&-&-\\
		9&7&0&39&9&138&103&214&43&32&0&-\\
		10&30&2&221&125&859&591&1463&658&533&14&17\\
		11&107&&&&&&&&5104&675&229\\
		12&424&&&&&&&&&&\\
\end{tabular}}
\end{table}

\noindent{\bf At least two markers case.}

The Yoshikawa moves with at least two marked vertices involved are the moves $\Omega_7$ and $\Omega_8$.

\begin{proposition}
The opportunity to make $\Omega_7$ type move in a diagram from the set $S_4$ is equivalent to its EPD code having the following property. There exists a code-edge such that the code-crossing names for that crossing (the endings of the code-edge) are (both $Y$ or $Z$ and their positions in code-crossings have different parity) or (are $Y$ and $Z$ and their positions in code-crossings have the same parity), see Fig.\;\ref{pic084}.
\end{proposition}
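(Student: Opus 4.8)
The plan is to reduce the geometric statement — the applicability of an $\Omega_7$ move — to a purely local combinatorial pattern in the EPD code, exactly in the spirit of the preceding propositions. First I would fix the local model of $\Omega_7$ as drawn in Fig.\;\ref{pic002}: its domain consists of precisely two marked vertices joined along a single common arc, together with the remaining free arcs emanating from that pair, and the move rearranges this picture without altering the number of crossings. Hence ``having the opportunity to make $\Omega_7$'' is equivalent to the existence, somewhere in the diagram, of two saddles sharing exactly one edge in the configuration shown. The geometric problem therefore becomes the detection, in the EPD code, of a single code-edge both of whose endpoints sit at marked vertices.

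Next I would translate ``two marked vertices sharing one edge'' into the code itself. A code-edge is an integer label occurring exactly twice, once in each of the two code-crossings at its ends, and the requirement that both ends be saddles is simply that both of these code-crossings carry the name $Y$ or $Z$ and never $X$. The entire remaining content of the proposition is then the extra parity-and-type condition that separates those adjacent saddle pairs which genuinely admit the move from those which do not. Recalling the convention of Fig.\;\ref{pic003}, at each code-crossing the positions $1,2,3,4$ are read counterclockwise from the incoming lower strand, so that positions of equal parity (the diagonal $\{1,3\}$ versus the diagonal $\{2,4\}$) record the two alternative ways the marker can route the strands through the vertex, while the names $Y$ and $Z$ record which of these routings is actually designated.

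The heart of the argument is to verify that the shared edge threads the two saddles in the manner demanded by the $\Omega_7$ picture exactly when the parity of its two positions and the two marker names match as stated. I would establish this by inspecting the planar local picture directly: for the two saddles to glue into the $\Omega_7$ domain their routings of the common edge must be complementary, and tracing the possibilities — both names $Y$, both names $Z$, and the two mixed cases — shows that when the names agree the edge must enter the two vertices at positions of opposite parity, whereas when the names disagree it must enter at positions of equal parity, every other combination producing instead the switched local picture to which the move does not apply. This yields one implication; the converse follows because each enumerated code pattern can be drawn back, uniquely by Fig.\;\ref{pic003}, into precisely the $\Omega_7$ domain, so that the pattern forces the move to be available.

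The main obstacle I expect is this last verification: maintaining the counterclockwise position convention consistently at \emph{both} vertices at once and confirming that the enumerated parity/type combinations are exactly — no more and no fewer — those realizing the ``before'' side of $\Omega_7$ rather than its mirror or its switch. Since no reduction of a switch move to $\Omega_7$ appears in the dependency list recalled earlier, I must take care that the characterization captures $\Omega_7$ together with only those mirror or switch instances genuinely interchangeable with it by planar isotopy, and excludes every configuration that would in fact require a move not among the available generators.
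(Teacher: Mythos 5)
Your proposal is correct and matches the paper's treatment: the paper offers no written proof of this proposition beyond stating that it (like the neighboring ones) ``can be checked by considering all possible combinatorial configurations,'' with the answer recorded in Fig.\;\ref{pic084}, and your plan --- detect a code-edge with both endpoints at marked vertices, then case-check the four name combinations against position parity using the rotational conventions of $Y$/$Z$ code-crossings --- is exactly that combinatorial verification, arriving at the same name/parity dichotomy. Your closing worry about switch moves dissolves on inspection, since switching both markers of an $\Omega_7$-admissible configuration yields again an $\Omega_7$-admissible configuration (the move's two sides are exchanged), so the characterized pattern set is switch-invariant.
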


\begin{figure}[ht]
\begin{center}
\begin{lpic}[b(1cm),t(1cm)]{./PICTURES/om_7_opp(7cm)}
  \lbl[t]{10,21;a}
	\lbl[t]{47,20;a}
	\lbl[l]{1,45;Z[*,*,a,*]=Z[a,*,*,*]=Y[*,*,*,a]=Y[*,a,*,*]}
	\lbl[l]{1,-5;Y[*,*,a,*]=Y[a,*,*,*]=Z[*,*,*,a]=Z[*,a,*,*]}
	\end{lpic}
		\caption{The opportunity to make $\Omega_7$ type move\label{pic084}}
\end{center}
\end{figure}

Finally one can see in Fig.\;\ref{pic002} that the opportunity to make $\Omega_8$ type move in a diagram from the set $S_4$ implies the opportunity to make $\Omega_4$ or $\Omega_4'$ type move.

Taking all of the above propositions (which can be checked by considering all possible combinatorial configurations) and described method of generating valid EPD codes of diagrams, we are now ready to generate and enumerate hard prime ch-diagrams.

Our notation in the form of $A^{\{B,C,D\}}_{\{E,F\}}$ for a marked graph diagram $M$ of a surface link $L$ means that $M$ has $A$ crossings, $E$ marked crossings, $F$ as the index of the diagram generated within the family of diagrams with the same pair $(A, E)$, the surface $L$ has $B$ number of components, Euler characteristic $C$ and the orientability $D\in\{Ori, Non\}$.

The numerical summary of the results from our described algorithm for searching for hard prime ch-diagrams are presented in Table\;\ref{table3}, where the number in column $M_k$ and row $n$ is the number of obtained diagrams with $n-k$ classical crossings and $k$ singular crossings.

\begin{remark}
In our enumeration some diagrams are the same after spherical isotopy or taking mirror images, for example the triple $6^{\{1,0, Non\}}_{\{4,6\}}$, $6^{\{1,0, Non\}}_{\{4,7\}}$, $6^{\{1,0, Non\}}_{\{4,8\}}$ have the same shadow that is periodic and after taking mirror images (with respect to the classical and marked vertices) and planar rotation they represent the identical diagram.
\end{remark}

We can check the orientability of the base surface for a surface-link represented by a connected marked graph diagram using the following method. Starting from orienting the first (non-labeled) edge, label code-edge in two code-crossings it belongs to by $T$ when it is oriented "toward" the crossing and $F$ if it is oriented "from" the crossing. Then, label all opposing code-edges in that two code-crossings $F$ or $T$ with the rule that when the code name for the crossing is $X$ then the opposing edge takes different label and otherwise the opposing edge takes the same label (in that crossing) see the convention on the left of Fig.\;\ref{pic004}.

Then, we label the non-labeled edges in the two newly considered vertices (endpoints of the already oriented edge) the appropriate letters to meet the crossing convention (when its name is $X$ and it meets new surface-component we choose one of the two possibilities). We repeat the process if there are non-labeled code-edges in visited vertices. We end up with the oriented graph with the orientation we will call here an \emph{abstract orientation} (an example is on the right of Fig.\;\ref{pic004}). One can easily verify the following.

\begin{proposition}\label{prop2}
The orientability of the surface-link represented by an abstractly oriented diagram from the set $S_4$ is equivalent to its EPD code having the following property. For all code-crossings named $X$ the code-edges are labeled by $[T,F,F,T]$ or $[F,F,T,T]$ or $[T,T,F,F]$ or $[F,T,T,F]$ and for all code-crossings named $Y$ or $Z$ the code-edges are labeled by $[T,F,T,F]$ or $[F,T,F,T]$.
\end{proposition}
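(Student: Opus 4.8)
The plan is to reduce orientability of the surface-link $F$ to a local compatibility condition at each code-crossing, and then to observe that the abstract-orientation procedure already builds in that condition except for a single sign at the marked vertices. First I would record what the propagation rules force automatically. At a code-crossing named $X$ the rule ``the opposing edge takes a different label'' assigns positions $\{1,3\}$ opposite $T/F$ values and positions $\{2,4\}$ opposite $T/F$ values, and the only quadruples with $\{1,3\}$ opposite and $\{2,4\}$ opposite are exactly $[T,F,F,T]$, $[F,F,T,T]$, $[T,T,F,F]$, $[F,T,T,F]$; so the $X$-clause of the statement holds for \emph{every} abstract orientation and carries no real information. Similarly, at a vertex named $Y$ or $Z$ the rule ``the opposing edge takes the same label'' forces positions $\{1,3\}$ equal and positions $\{2,4\}$ equal, so the only quadruples that can occur there are the two good ones $[T,F,T,F]$, $[F,T,F,T]$ together with the two bad ones $[T,T,T,T]$, $[F,F,F,F]$. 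Hence the whole proposition is equivalent to the single assertion that $F$ is orientable if and only if at no marked vertex do the two opposite pairs receive the same label.

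Next I would translate this remaining assertion through the hyperbolic splitting. The key reduction is that $F$ is orientable if and only if the edges of the marked graph can be given directions that are coherent simultaneously in both the positive resolution $L_+(D)$ and the negative resolution $L_-(D)$. This holds because $F$ is assembled from disk caps around its maxima and minima, from product pieces $L_\pm\times I$, and from the saddle disks at the marked vertices; the caps and product pieces are disks, hence freely orientable and imposing no constraint beyond coherence of each strand, while the \emph{only} gluing obstruction lives at the saddle disks. An orientation of $F$ restricts to coherent orientations of the two cross-sections $L_\pm$, which use the same edge data (edges are unaffected by the resolutions, and at a classical crossing the understrand $a$--$c$ and the overstrand $b$--$d$ are disjoint sheets that merely project across one another); conversely, edge directions coherent in both resolutions let one orient every saddle disk, product piece, and cap compatibly.

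The heart of the argument is then the purely local computation at a marked vertex. Here the saddle disk is a band surgering the two arcs of one resolution into those of the other, say $\{a,b\},\{c,d\}$ in $L_-$ and $\{a,d\},\{b,c\}$ in $L_+$ (the marker $Y$ or $Z$ only interchanges the roles of $L_+$ and $L_-$, which does not affect the outcome). Viewing the saddle disk as an oriented rectangle, the orientations it induces on its two attaching arcs are opposite, so it is orientation-compatible with the rest of $F$ exactly when the arcs it joins are anti-parallel across it. Algebraically, coherence in $L_-$ gives $a\neq b$ and $c\neq d$, while coherence in $L_+$ gives $a\neq d$ and $b\neq c$; combining $a\neq b$ with $a\neq d$ forces $b=d$, and combining $a\neq b$ with $b\neq c$ forces $a=c$, so the solutions are precisely $[T,F,T,F]$ and $[F,T,F,T]$. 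A common label $a=b=c=d$ violates $a\neq b$, meaning no orientation is coherent across the band; the band is then half-twisted and carries a M\"obius band, so $F$ is non-orientable. Gluing the locally coherent orientations along the edge-bands (whose coherence is guaranteed by the good $X$-patterns) produces a global orientation of $F$ exactly when every saddle shows a good pattern, which is the asserted equivalence.

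I expect the saddle computation to be the only genuine obstacle: one must fix conventions carefully --- the counterclockwise cyclic order $a,b,c,d$, which adjacent pairing is carried by $L_+$ versus $L_-$ for each marker $Y$ or $Z$, and the induced boundary orientation of the rectangle --- and check that every case lands on the pattern predicted by the proposition rather than its mirror, so that the bad patterns $[T,T,T,T]$ and $[F,F,F,F]$ are correctly identified with the half-twisted, non-orientable band. The remaining steps, namely that caps and product pieces impose no further constraint and that vertex-by-vertex coherence is equivalent to a global orientation of $F$, are routine once the decomposition of $F$ from the hyperbolic splitting is in place.
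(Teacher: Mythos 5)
Your geometric core (orientability of $F$ is equivalent to the existence of edge directions that are coherent simultaneously in both resolutions, plus the local analysis at classical and marked crossings) is sound, and it is essentially what the paper leaves to the reader with ``one can easily verify.'' But your opening reduction is wrong, and it breaks the proof. You claim that in \emph{every} abstract orientation the propagation rules automatically force the four good patterns at $X$-crossings and leave only $[T,F,T,F]$, $[F,T,F,T]$, $[T,T,T,T]$, $[F,F,F,F]$ as possibilities at $Y/Z$-vertices, so that the proposition reduces to excluding $[T,T,T,T]$ and $[F,F,F,F]$ at marked vertices. This conflates ``labelings that satisfy the propagation rules'' with ``the labeling actually output by the propagation procedure.'' The procedure orients each edge once and never revisits it; when the propagation front closes a cycle and returns to an already-oriented edge, the rule at that crossing simply cannot be enforced, and the resulting violation can sit at an $X$-crossing just as well as at a marked vertex. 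If your reduction were correct, the $X$-clause of the proposition would be vacuous; in fact it is essential.

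Concrete counterexample: the standard $\mathbb{P}^2$ diagram --- two arcs crossing twice, one classical crossing $P$ and one marked vertex $Q$, with four edges $e_1,e_2,e_3,e_4$ joining $P$ to $Q$, opposite pairs $(e_1,e_3)$ and $(e_2,e_4)$ at both vertices. This diagram is admissible (both resolutions are one-crossing unknot diagrams), lies in the $n=2$ part of the generated set, and represents the non-orientable $\mathbb{P}^2$. Orient $e_1$ from $P$ to $Q$ and propagate through $Q$ first: the $Y/Z$ rule forces $e_3$ to point toward $Q$ and the convention forces $e_2,e_4$ to point away from $Q$; hence at $P$ the opposite pair $(e_1,e_3)$ are both labeled $F$ and $(e_2,e_4)$ both $T$, i.e.\ $P$ carries the pattern $[F,T,F,T]$, which is bad at an $X$-crossing, while $Q$ carries the good pattern $[T,F,T,F]$. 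Your reduced criterion (no $[T,T,T,T]$ or $[F,F,F,F]$ at a marked vertex) would declare this surface orientable, which is false; propagating through $P$ first instead places the violation at $Q$, but as $[T,F,F,T]$ --- again not one of your two ``bad'' patterns. The repair is to restructure the argument: first prove, as you do, that $F$ is orientable iff \emph{some} labeling with all good patterns exists; then show separately that the propagation procedure, whose steps are forced by any good labeling and whose free choices (initial edge, new strands at $X$-crossings) can be matched against one, outputs a good labeling whenever one exists, and otherwise necessarily exhibits a violation. That second step, which your write-up replaces by the false ``automatic'' claim, is exactly what ties orientability to \emph{the} abstract orientation named in the statement.
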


\begin{center}
\begin{figure}[ht]
\begin{center}
\begin{center}
\includegraphics[width=0.23\textwidth]{./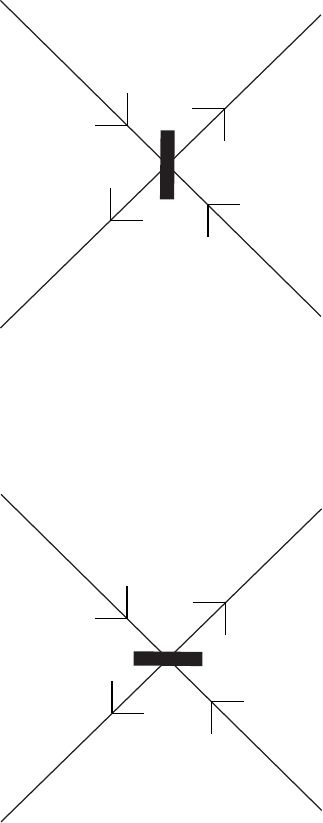}
\includegraphics[width=0.53\textwidth]{./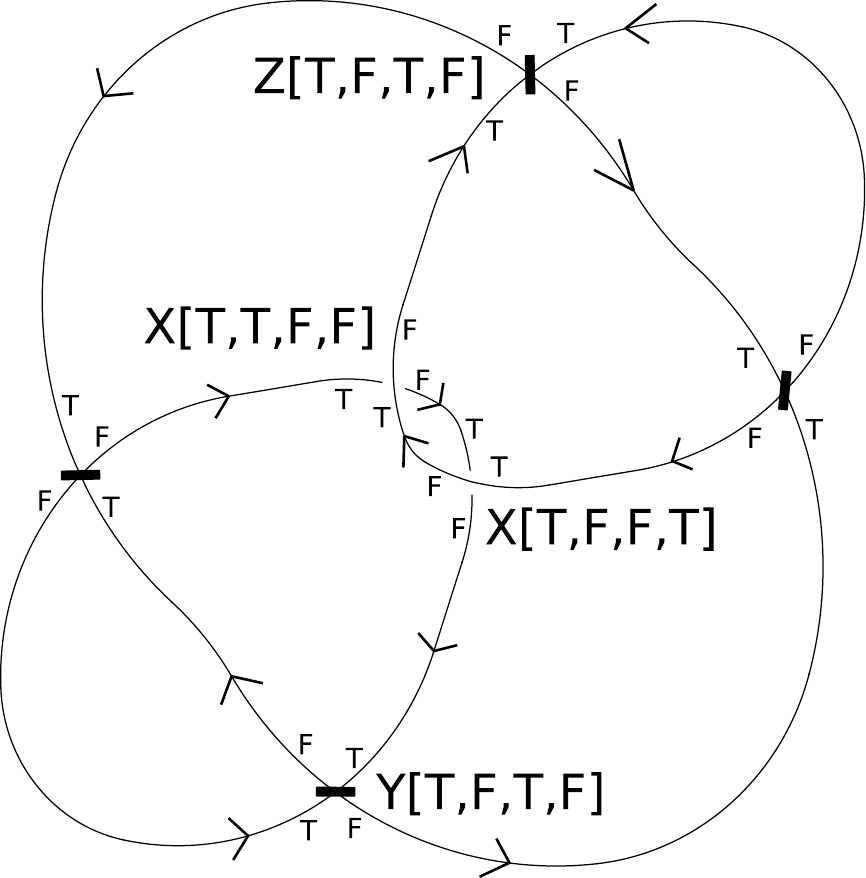}
\end{center}
\end{center}
		\caption{An orientation near a marked vertex, and an example\label{pic004}}
\end{figure}
\end{center}

The results presented in Table\;\ref{table3} lead us to the following.

\begin{theorem}
There are no hard marked graph diagrams without classical crossings having an odd number of (marked) crossings.
\end{theorem}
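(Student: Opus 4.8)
The plan is to prove the contrapositive: I will show that every ch-diagram in $S_4$ without classical crossings and with an \emph{odd} number of marked crossings admits an $\Omega_7$ opportunity, and hence a crossing-non-increasing move, so it cannot be hard. Equivalently, I will show that $\Omega_7$-freeness forces an even number of marked vertices.

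First I would isolate the relevant move. The Reidemeister moves $\Omega_1,\Omega_2,\Omega_3$ and the mixed moves $\Omega_4,\Omega_4',\Omega_5$ each require at least one classical ($X$) crossing, and an $\Omega_8$ opportunity forces an $\Omega_4$ or $\Omega_4'$ opportunity (as observed after Fig.\;\ref{pic084}), so it too needs a classical crossing; the reducing moves $\Omega_1,\Omega_6,\Omega_6'$ were already excluded in the construction of $S_4$. Thus for a diagram whose code-crossings are all of type $Y$ or $Z$, the only Yoshikawa move whose opportunity can survive is $\Omega_7$, and being hard is equivalent to having no $\Omega_7$ opportunity.

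Next I would recast $\Omega_7$-freeness as a clean $\mathbb{Z}_2$-invariant. To each end of a code-edge I assign $\phi=(\text{name})+(\text{position parity})\in\mathbb{Z}_2$, taking $Y=0$, $Z=1$, and parity $1$ for the odd positions $\{1,3\}$. Reading off the $\Omega_7$ Proposition, an $\Omega_7$ opportunity at an edge is precisely the situation ``same name and different parity'' or ``different name and same parity,'' i.e.\ the two ends carry \emph{different} $\phi$; so no $\Omega_7$ opportunity means $\phi$ is constant along every code-edge. At a single marked vertex the two ends sharing a given value of $\phi$ occupy opposite (hence transverse) positions, so constancy of $\phi$ along edges propagates $\phi$ along the straight-through transverse circular components of the shadow used for the edge-numbering: each such transverse circle becomes monochromatic and embedded, and the two strands running through every vertex carry opposite $\phi$-values. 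Consequently the transverse circles split into two families of pairwise disjoint simple closed curves (those with $\phi=0$ and those with $\phi=1$), and every marked vertex is a transverse intersection of one curve from each family. Since two closed curves on $S^2$ always meet in an even number of points, summing over all such pairs shows that $n$, the total number of marked vertices, is even (the degenerate case of only one family giving $n=0$).

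The main obstacle is the middle step: certifying that the EPD criterion of the $\Omega_7$ Proposition is genuinely equivalent to ``$\phi$ constant along each code-edge,'' and that within each vertex the two $\phi$-classes fall on transverse (opposite) position-pairs. This is the place where one must run through the position-and-name bookkeeping over all local configurations, and in particular fix the convention so that the labelling ``$\{1,3\}$ versus $\{2,4\}$'' matches the transverse pairing consistently around \emph{every} vertex and across the global orientation of the sphere. Once this combinatorial verification is in hand, the concluding even-intersection count is immediate, and it is worth noting that it agrees with the Euler-characteristic parity $\chi=|L_+|+|L_-|-n\equiv 0 \pmod 2$ coming from the handle decomposition into $|L_-|$ minima, $n$ saddles and $|L_+|$ maxima.
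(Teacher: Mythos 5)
Your argument is correct in its core and proceeds by a genuinely different route than the paper's. The paper argues by contradiction through graph theory: a $4$-regular graph on an odd number of vertices cannot be bipartite (both parts of a bipartition of a $4$-regular graph must have equal size), so by K\"onig's theorem it contains an odd cycle, and the paper then asserts that such a cycle yields either an $\Omega_6$/$\Omega_6'$ opportunity or an edge whose two end markers ``mark the same region'', i.e.\ an $\Omega_7$ opportunity. Your proof replaces that asserted final step with an explicit mechanism: the $\mathbb{Z}_2$-label $\phi=\text{name}+\text{position parity}$ converts the paper's $\Omega_7$ criterion into ``$\phi$ is constant on every code-edge'' (your case check of the proposition is right: all three opportunity cases are exactly the cases where $\phi$ differs at the two ends), the vertex convention makes the two $\phi$-classes at each vertex exactly the two straight-through pairs, so $\Omega_7$-freeness splits the diagram into two families of pairwise disjoint simple closed curves meeting only across families, and evenness of $n$ follows from mod~$2$ intersection counts on $S^2$. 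The paper's route is shorter but its decisive step carries no justification beyond ``because every vertex is $4$-valent''; your route is self-contained and proves something stronger than the parity statement, namely a structure theorem for $\Omega_7$-free diagrams without classical crossings. In effect you establish directly the bipartite-type structure whose absence the paper infers from K\"onig.

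Two caveats, neither fatal. First, you prove the statement for diagrams in $S_4$, whereas the theorem concerns arbitrary hard diagrams; what falls outside your argument are loops at marked vertices, which $S_4$ excludes by construction. They are easy to dispose of: in a non-split diagram with no classical crossings, a loop at a marked vertex cannot attach at opposite positions of that vertex (the sub-diagram inside the disk it would bound then has odd total degree, impossible in a $4$-regular graph), so it attaches at adjacent positions and, by non-splitness, bounds an empty disk, giving an $\Omega_6$ or $\Omega_6'$ opportunity; this is exactly the $\Omega_6$/$\Omega_6'$ branch of the paper's proof, and your write-up should include it to match the theorem's stated generality. Second, your closing remark is false as stated: the handle decomposition gives $\chi=|L_-|+|L_+|-n$ but no parity constraint on $\chi$ (for instance $\chi(\mathbb{P}^2)=1$ with the same formula holding for its diagrams), so ``$\chi\equiv 0\pmod 2$'' would need an orientability argument you do not give; since that remark is decoration and not used anywhere, simply delete it.
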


\begin{proof}
Assume the contrary. This diagram is a $4$-regular graph with an odd number of vertices therefore it cannot be bipartite. By Konig Theorem \cite{Kon01} it has then an odd cycle, so it either makes the opportunity to make $\Omega_6$ or $\Omega_6'$ type move or (because every vertex is $4$-valent) some (connected by an edge) two vertices from the cycle have the types of markers that mark the same region and that makes the opportunity to make $\Omega_7$ type move, which contradicts the hardness of the marked graph diagram.
\end{proof}

\begin{proposition}[\cite{Yos94}]\label{prop1}
Every nontrivial surface-link with its ch-diagram having $c$ crossings, including $m$ marked crossings, must satisfy inequalities $2\leq m\leq c-4$.
\end{proposition}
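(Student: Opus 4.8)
The plan is to work from the Morse-theoretic description underlying the ch-diagram. By the hyperbolic splitting theorem quoted above, a surface-link presented by $D$ is reassembled from the cross-sections: just above the saddle level the section is the positive resolution $L_+(D)$, whose loops cap off with maxima in $\mathbb{R}^3_1$, and just below it is $L_-(D)$, whose loops cap off with minima in $\mathbb{R}^3_{-1}$, while the $m$ marked vertices are exactly the saddles. Writing $c_{+}$ and $c_{-}$ for the numbers of loops in $L_+(D)$ and $L_-(D)$, this gives the Euler-characteristic relation $\chi(F)=c_{+}-m+c_{-}$, and I will combine it with the per-component inequality $(\text{markers})\geq 2-\chi$ recalled from \cite{Jab16}. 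The two inequalities will be proved by contraposition: I will show that $m\leq 1$ forces a trivial surface-link, and that fewer than four classical crossings (i.e. $c-m\leq 3$) also forces a trivial surface-link.

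For the lower bound $m\geq 2$, first take $m=0$. Then $D$ has no saddles, both resolutions coincide with the underlying classical diagram, which is a trivial link by the ch-condition, and capping each loop by a maximum and a minimum produces a split union of unknotted $\mathbb{S}^2$'s; this is the standard situation identified with classical unlinks in the zero-marker discussion above. For $m=1$, the per-component inequality limits the base surface to $\mathbb{S}^2$'s together with at most one $\mathbb{P}^2$, so the whole picture is a single band (saddle) cobordism between the trivial links $L_-(D)$ and $L_+(D)$. A single band attached to an unlink either merges two trivial components, splits one trivial component, or reattaches one component non-orientably; in the first two cases the band can be slid to a standard position giving an unknotted $\mathbb{S}^2$, and in the third it yields a standard $\mathbb{P}^2_{\pm}$. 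In every case the result is a standard surface-unlink, so nontriviality forces $m\geq 2$. This step essentially reproduces the $m\le1$ classification of \cite{Yos94}.

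For the upper bound $m\leq c-4$, equivalently $c-m\geq 4$ classical crossings, I assume $c-m\leq 3$ and show triviality. Both $L_+(D)$ and $L_-(D)$ are then trivial link diagrams whose classical crossings number at most three, so the admissible shadows on the sphere (at least $2$-connected, every region a quadrilateral, no nugatory crossings) together with the placement of the $m$ markers fall into a short, explicit list. For each such configuration I will exhibit either a reducing Yoshikawa move carrying $D$ toward a standard diagram or a direct identification of $D$ as a connected sum or split union of the standard pieces of Fig.\;\ref{m005}; in all cases the surface-link is a standard surface-unlink. Combining the two halves yields $2\leq m\leq c-4$ for any nontrivial surface-link.

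I expect the upper bound to be the main obstacle. The difficulty is not the counting but the geometry: with so few classical crossings one must control how the $m$ bands weave through the diagram and verify that each resulting surface is genuinely isotopic to a standard one rather than merely having the right Euler characteristic and resolutions. Here I would lean on Yoshikawa's original low-complexity analysis in \cite{Yos94}, since a fully self-contained case check, while finite, is the technically heaviest part; the lower bound, by contrast, is routine once the single-band cobordism is understood.
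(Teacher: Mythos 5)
First, note that the paper does not prove this proposition at all: it is stated as a quoted result of Yoshikawa \cite{Yos94}, so the only ``proof'' in the paper is that citation. Your proposal, read closely, ends up in the same position---both halves ultimately defer to \cite{Yos94}---but it misrepresents how hard the deferred steps are. The genuine gap is in the $m=1$ step of the lower bound. You assert that a single band attached to a trivial link, producing a trivial link, ``can be slid to a standard position,'' and you call this half routine. It is not: in the orientable case (one minimum, one saddle, two maxima, or the upside-down picture) this assertion is precisely Scharlemann's theorem that smooth $2$-spheres in $\mathbb{R}^4$ with four critical points are standard---a deep result with a long, technical proof---and the non-orientable case (a $\mathbb{P}^2$ with three critical points) is a further nontrivial statement. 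A band attached to an unknot can be knotted and clasped arbitrarily even when the surgered link is again trivial; nothing elementary slides it to a standard position. Stated without proof, and without at least a citation of Scharlemann, this step is a hole rather than a routine verification.

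The upper bound has a second, structural problem beyond the fact that you explicitly leave it to Yoshikawa's low-complexity analysis. Your finiteness claim is false: assuming $c-m\leq 3$ bounds only the classical crossings, while $m$ remains unconstrained, so the total number of crossings is unbounded. A ch-diagram with no classical crossings at all can have arbitrarily many marked vertices (diagrams of connected sums of standard tori already do), so the admissible configurations do not ``fall into a short, explicit list''; one needs an inductive or structural argument that handles arbitrarily many markers, which is what \cite{Yos94} actually supplies. Moreover, the constraints you import ($2$-connectedness, quadrilateral regions, no nugatory crossings) are properties of the prime reduced shadows enumerated by this paper's algorithm, not of an arbitrary ch-diagram of a nontrivial surface-link, so they cannot be assumed when proving the proposition as stated.
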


\begin{theorem}
All minimal hard prime surface-unlink diagrams (with prime base surface components) up to mirror images and up to $10$ crossings are as follows (presented in Fig.\;\ref{M_1_C_10_K_1}, \ref{hard08}, \ref{hard09}, \ref{hardM}).
\begin{itemize}
    \item  $\mathbb{T}^2$ with the diagram $4^{\{1,0, Ori\}}_{\{4,1\}}$.
		\item  $\mathbb{S}^2\sqcup\mathbb{S}^2$ with the diagram $8^{\{2,4,Ori\}}_{\{0,1\}}$.
		\item  $\mathbb{S}^2$ with the diagrams $9^{\{1,2,Ori\}}_{\{0,1\}}$, $9^{\{1,2,Ori\}}_{\{0,4\}}$, $9^{\{1,2,Ori\}}_{\{0,5\}}$, $9^{\{1,2,Ori\}}_{\{0,6\}}$, $9^{\{1,2,Ori\}}_{\{2,38\}}$.
		\item  $\mathbb{P}^2$ with the diagram $9^{\{1,1,Non\}}_{\{2,39\}}$.
		\item  $\mathbb{S}^2\sqcup\mathbb{P}^2$ with the diagram $10^{\{2,3,Non\}}_{\{1,1\}}$.
\end{itemize}
\end{theorem}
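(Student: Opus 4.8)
The plan is to realize the classification as the certified output of the exhaustive search of Section 3, refined by the hardness criteria of Section 4. First I would run the full generation pipeline, taking sphere partition graphs to shadows (the sets $S_1$, $S_2$), then to PD codes (the set $S_3$), and finally to the EPD codes of all ch-diagrams (the set $S_4$), carrying this out for every total crossing number from $2$ through $10$. By construction every diagram in $S_4$ is prime and non-split, has both resolutions trivial, and is produced without mirror duplicates; the totals in column $DG$ of Table \ref{table1} certify that the underlying shadow enumeration is complete at each $n$, so no ch-diagram in this range is missed.

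Next I would discard from $S_4$ every diagram admitting a crossing-non-increasing Yoshikawa move, leaving exactly the hard diagrams counted in the columns $M_k$ of Table \ref{table3}. This discarding is purely combinatorial: a code is removed precisely when it matches one of the configuration patterns in the propositions detecting an $\Omega_2$, $\Omega_3$, $\Omega_4$/$\Omega_4'$, $\Omega_5$, or $\Omega_7$ opportunity. The reducing moves $\Omega_1$, $\Omega_6$, $\Omega_6'$ are already excluded by the way $S_4$ is built, and by the remark following Figure \ref{pic002} any $\Omega_8$ opportunity forces an $\Omega_4$/$\Omega_4'$ opportunity, so $\Omega_8$ needs no separate test. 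Because these patterns exhaust the local pictures of all the Yoshikawa generators (and their mirror and switch variants, which by the dependency list reduce to the generators together with planar isotopy), a code surviving every test admits no simplifying move; after additionally removing the standard surface-unlink diagrams of Figure \ref{m005} themselves, as the definition of hardness requires, exactly the hard prime diagrams remain.

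Among these I would then isolate those whose surface-link is an unlink with prime base components. The base surface of each candidate is read off combinatorially: the marker count $m$ together with the orientability test of Proposition \ref{prop2} fixes the Euler characteristic and orientability type, and the number of components is immediate, so each diagram carries a well-defined type in the notation $A^{\{B,C,D\}}_{\{E,F\}}$. For $m=0$ the surface-link is a classical unlink, and I would quote the classical classification established above (the $8$- and $12$-crossing unlinks and the four $9$-crossing unknots); for $m=1$ every such surface-link is a trivial $\mathbb{S}^2$- or $\mathbb{P}^2$-unlink by \cite{Yos94}; for $m\ge 2$, constrained by the bound $2\le m\le c-4$ of Proposition \ref{prop1} and trimmed further by the theorem forbidding markerless-classical hard diagrams with an odd number of crossings, I would confirm triviality by matching the computed Alexander ideal against that of the corresponding standard surface-unlink of Figure \ref{m005}. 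Sorting the confirmed unlink diagrams by total crossing number and retaining, for each prime base type, the minimal representatives then yields exactly the diagrams of Figures \ref{M_1_C_10_K_1}, \ref{hard08}, \ref{hard09} and \ref{hardM}.

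The main obstacle I expect is establishing \emph{completeness and correctness of the two filtering passes at once}, rather than any single identification. On one side I must be certain that the configuration patterns in the move-opportunity propositions capture every way a reducing move can appear, so that no spurious ``hard'' diagram survives and no genuinely hard one is wrongly deleted; this is the step that is checked by exhausting all combinatorial configurations and is the most error-prone. On the other side, among the hard diagrams I must reliably separate trivial from nontrivial surface-links and then distinguish genuinely inequivalent minimal diagrams from spherical-isotopy or mirror duplicates, arguing as in the classical subcase that, for example, two $9$-crossing unknot diagrams are inequivalent because they realize different numbers of $5$-gons or share the same nonzero writhe. The decisiveness of the triviality test rests on the Jones polynomial being a complete unlink detector in this range \cite{DasHou97}, \cite{Thi01}, and on the Alexander ideal sufficing to pin down the standard type; verifying that these invariants settle every candidate up to $10$ crossings is the crux of the argument.
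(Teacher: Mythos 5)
Your proposal takes essentially the same route as the paper: its proof likewise treats the theorem as the certified output of the enumeration pipeline, partitioning the computed hard prime diagrams by base-surface type (Proposition \ref{prop2} on each component, Euler characteristic, number of components) and settling triviality via Proposition \ref{prop1}, the first Alexander ideal, and the standard surface-link table of \cite{Yos94} (with corrections from \cite{INO18}). Your write-up is a faithful, more detailed expansion of that same argument, including the same treatment of the $m=0$ (classical), $m=1$ (Yoshikawa), and $m\geq 2$ (Alexander ideal) cases.
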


\begin{proof}
We partition further the computational results described above by the topological type of the base surface (using Prop.\;\ref{prop2} on each diagram component, Euler characteristic and the number of components) and by the triviality of the surface-link type (using Prop.\;\ref{prop1}, the first Alexander ideal and the standard surface-link table \cite{Yos94}, where errors in the invariant calculations were corrected in \cite{INO18}).
\end{proof}

\begin{figure}[ht]
\includegraphics[width=0.495\textwidth]{./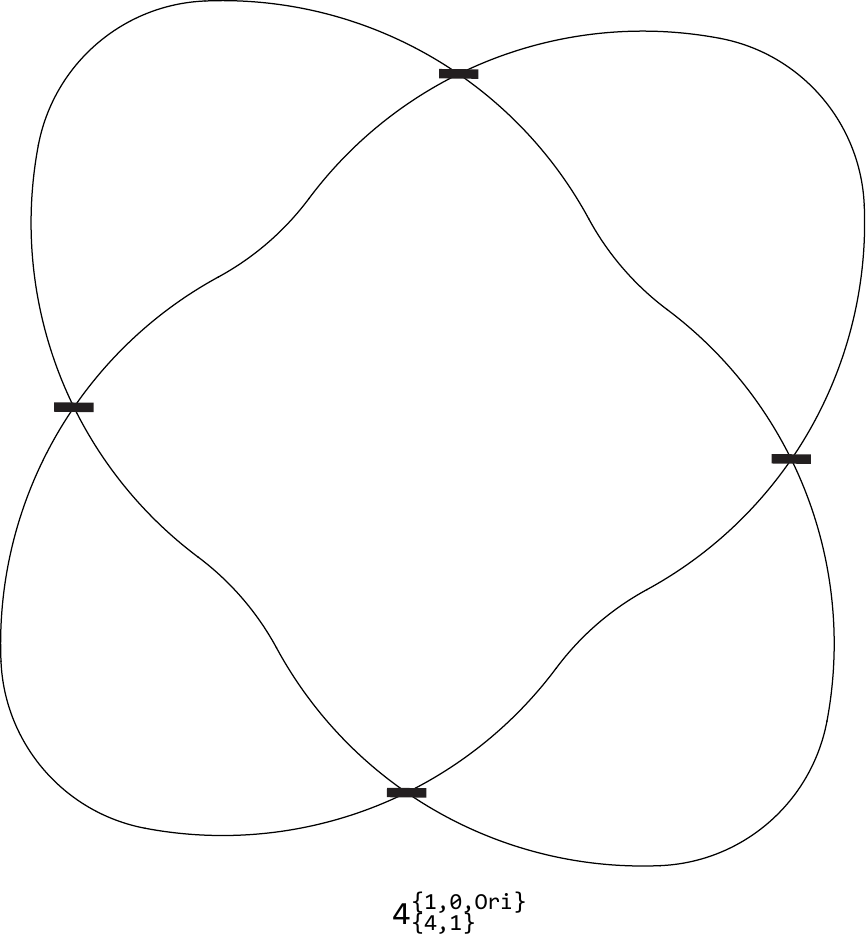}
\includegraphics[width=0.495\textwidth]{./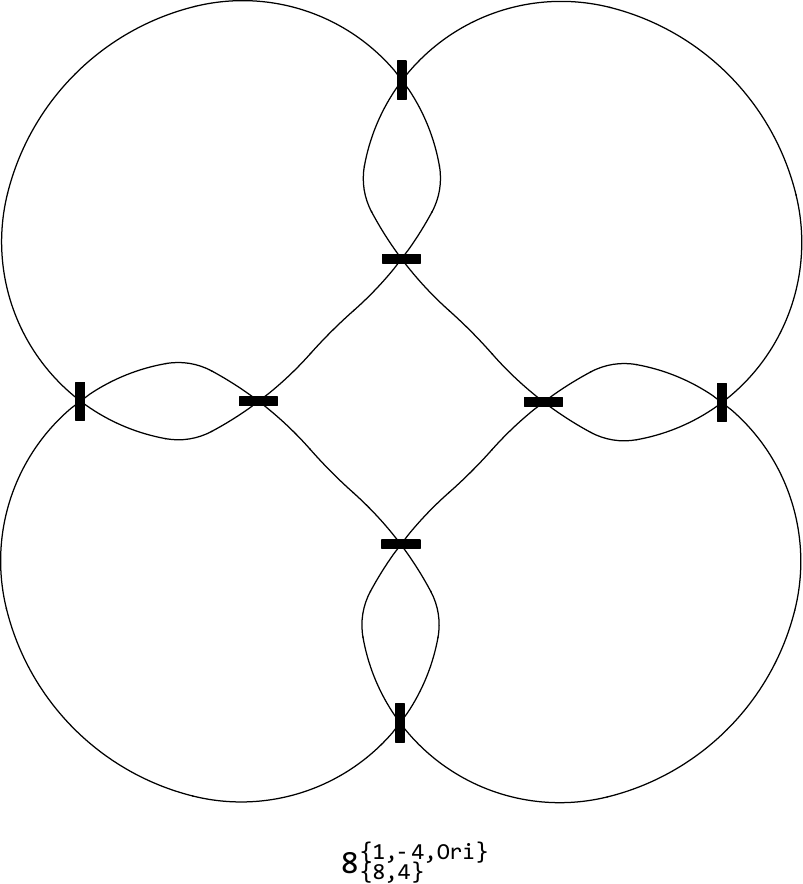}
\includegraphics[width=0.495\textwidth]{./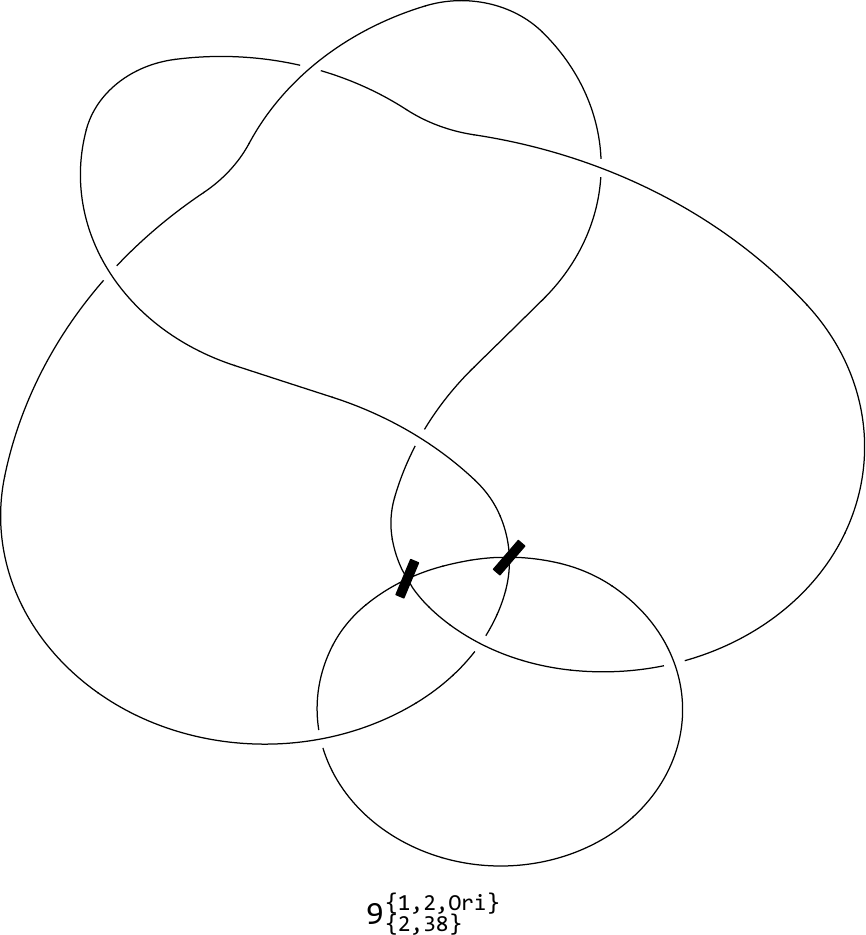}
\includegraphics[width=0.495\textwidth]{./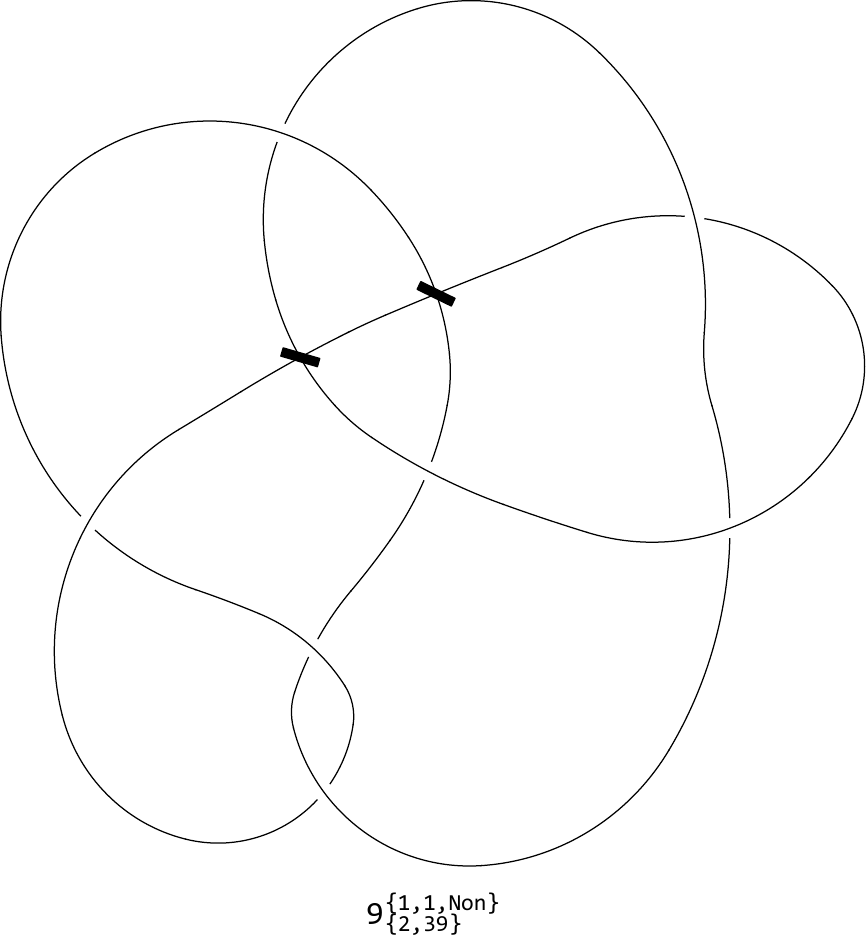}
\caption{Minimal hard prime surface-unlink diagrams with at least one marked vertex.\label{hardM}}
\end{figure}

\begin{remark}
As the computations show, there is the case of hard prime marked ch-diagram for $2$-knots with less than $9$ crossings namely $8^{\{1,2,Ori\}}_{\{2,7\}}$ but it is not the $2$-unknot because its first elementary ideal is generated by the Alexander polynomial of the trefoil, therefore it is the spun $2$-knot of the trefoil. It is, therefore, a kind of surprise that this hard diagram has the same number of classical and marked vertices as its standard diagram $8_1$ (see \cite{JKL15}, \cite{KeaKur08}, \cite{Lom81}, \cite{Yos94}) which is not hard. 

The same property (up to mirror image) shares for example the diagram $9^{\{1,2,Ori\}}_{\{2,31\}}$ which is $9_1$ in standard tables, the diagram $9^{\{2,2,Ori\}}_{\{2,21\}}$ which is $9_1^{0,1}$, the diagram $9^{\{2,0,Non\}}_{\{4,79\}}$ which is $9_1^{1,-2}$, the diagram $10^{\{1,2,Ori\}}_{\{2,121\}}$ which is $10_{1}$, the diagram $10^{\{2,2,Ori\}}_{\{2,163\}}$ which is $10_1^{0,1}$, the diagram $10^{\{2,2,Non\}}_{\{2,68\}}$ which is $10_1^{0,-2}$ and the diagram $10^{\{2,0,Non\}}_{\{4,426\}}$ which is $10_1^{-2,-2}$. All diagrams from this remark are shown in Fig.\;\ref{hardNT}-\ref{hardNT2}, the other diagrams from Yoshikawa's table are either unknotted, hard or their hard diagrams (if they exist) have greater the number of classical or marked crossings.
\end{remark}

\begin{figure}[ht]
\includegraphics[width=0.495\textwidth]{./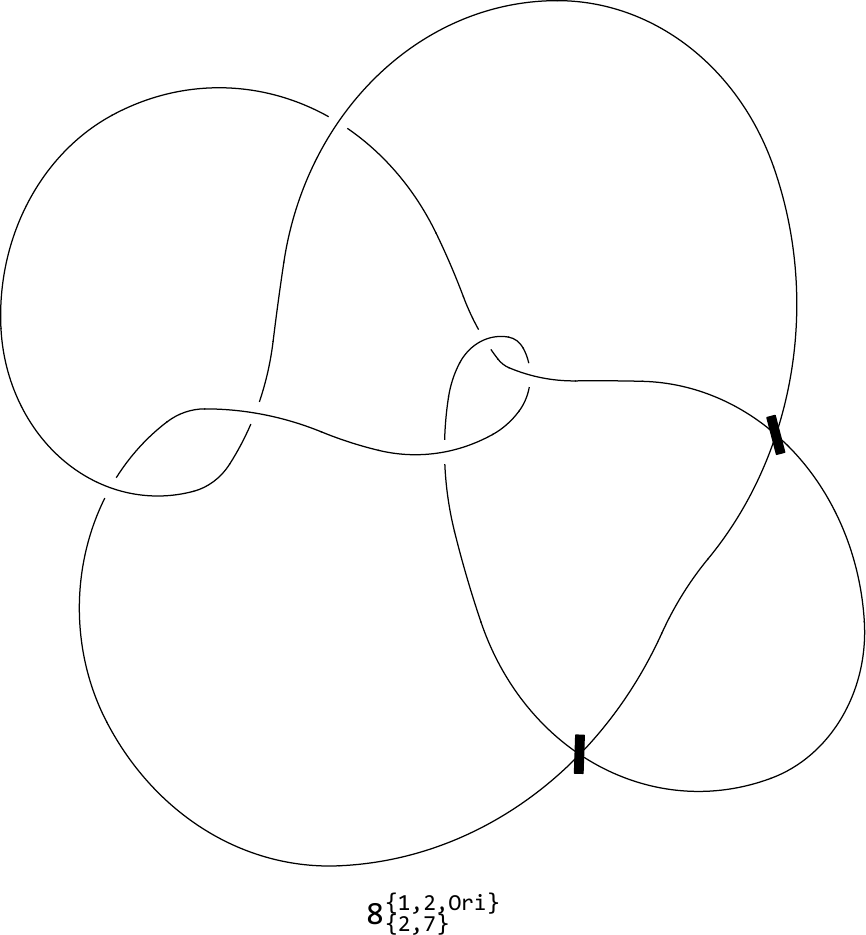}
\includegraphics[width=0.495\textwidth]{./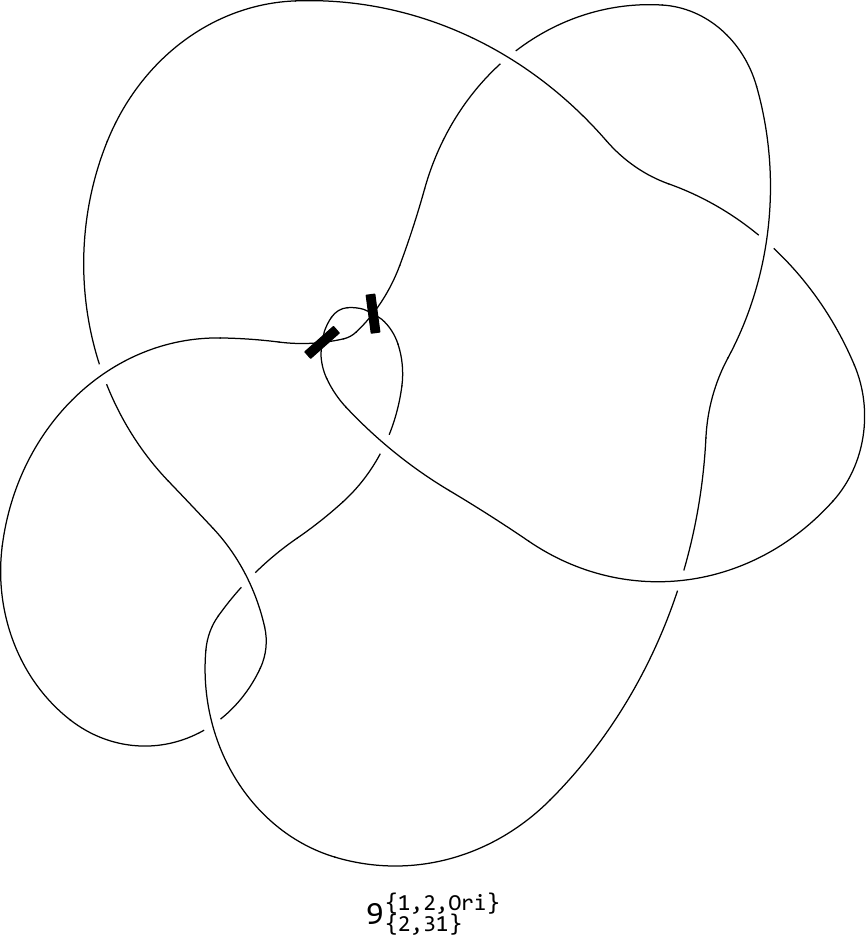}
\includegraphics[width=0.495\textwidth]{./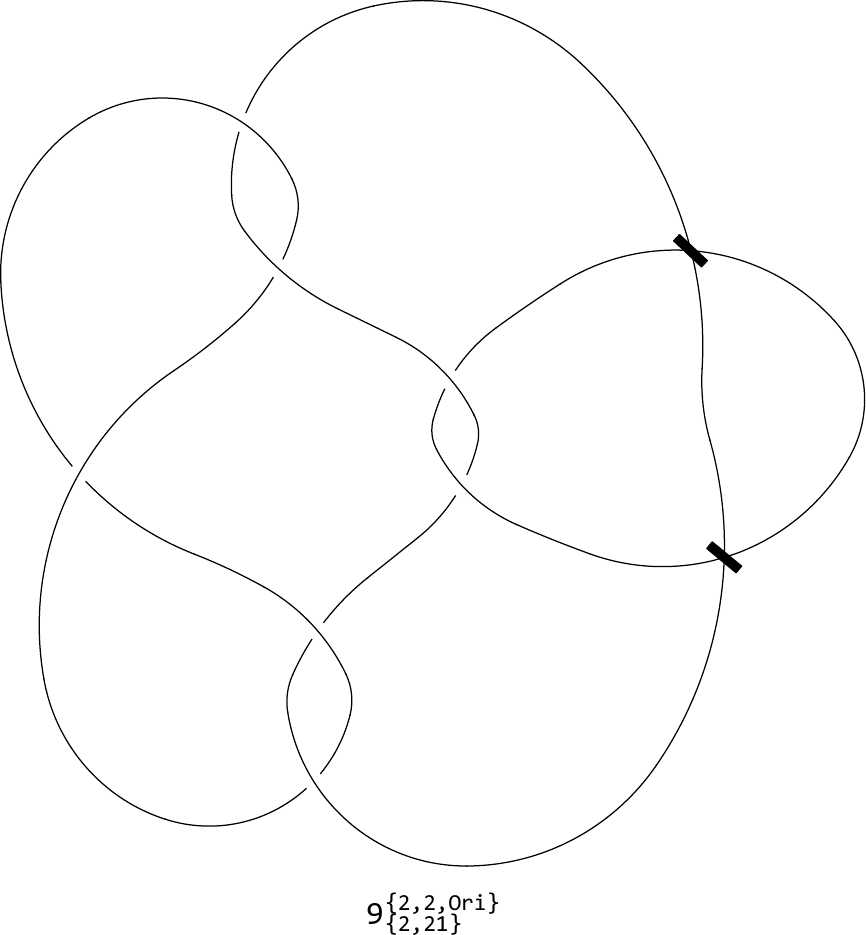}
\includegraphics[width=0.495\textwidth]{./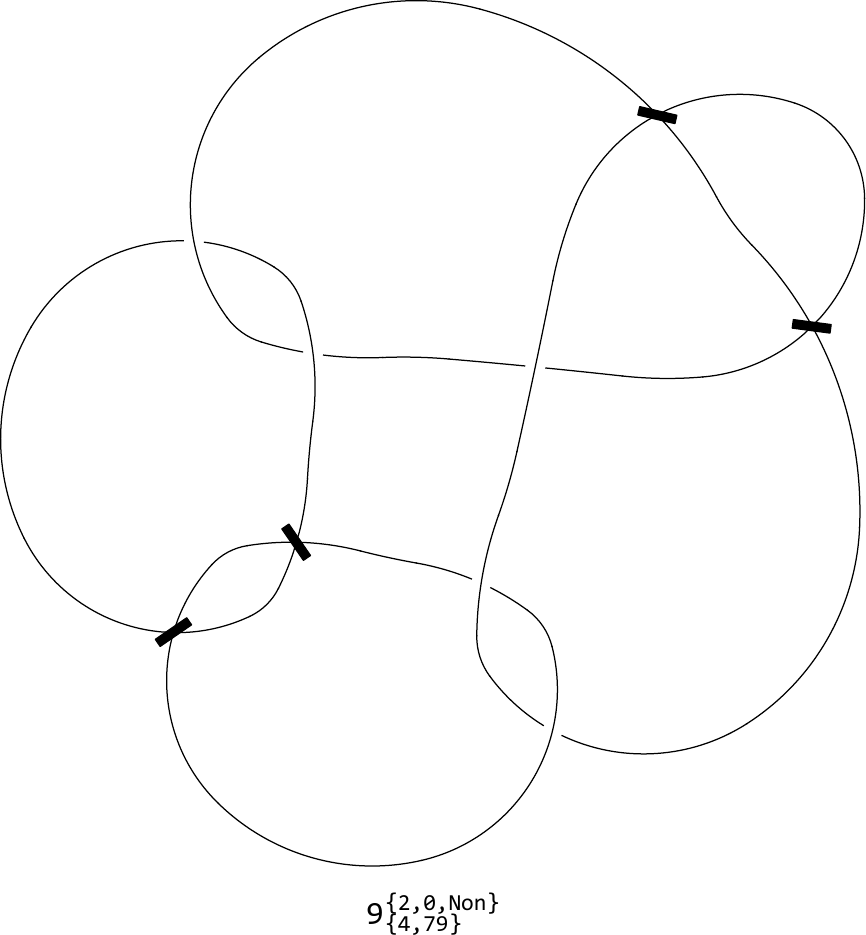}
\caption{Hard nontrivial surface-link diagrams.\label{hardNT}}
\end{figure}

\begin{remark}
The diagrams $4^{\{1,0, Ori\}}_{\{4,1\}}$ and $8^{\{1,-4, Ori\}}_{\{8,4\}}$ in Fig.\;\ref{hardM} give us an idea how to construct a hard prime diagram for an orientable surface-unknot with arbitrarily higher odd genus. Just expand the diagram (as shown in Fig.\;\ref{higher_genus}) with the pattern matching quadruples of marked vertices to form a diagram with appropriate periodicity with is clearly prime and hard. One can transform the diagram to the standard connected sum of unknotted tori using commutativity of marked vertices in $2$-strand braid diagrams (for details see \cite{Jab13}) and then applying $\Omega_7$ type moves.
\end{remark}

\begin{figure}[ht]
\begin{center}
\begin{lpic}[]{./PICTURES/higher_genus(7.5cm)}
	\end{lpic}
		\caption{A hard prime diagram for an orientable surface-unknot with odd genus.\label{higher_genus}}
\end{center}
\end{figure}

\begin{figure}[ht]
\includegraphics[width=0.495\textwidth]{./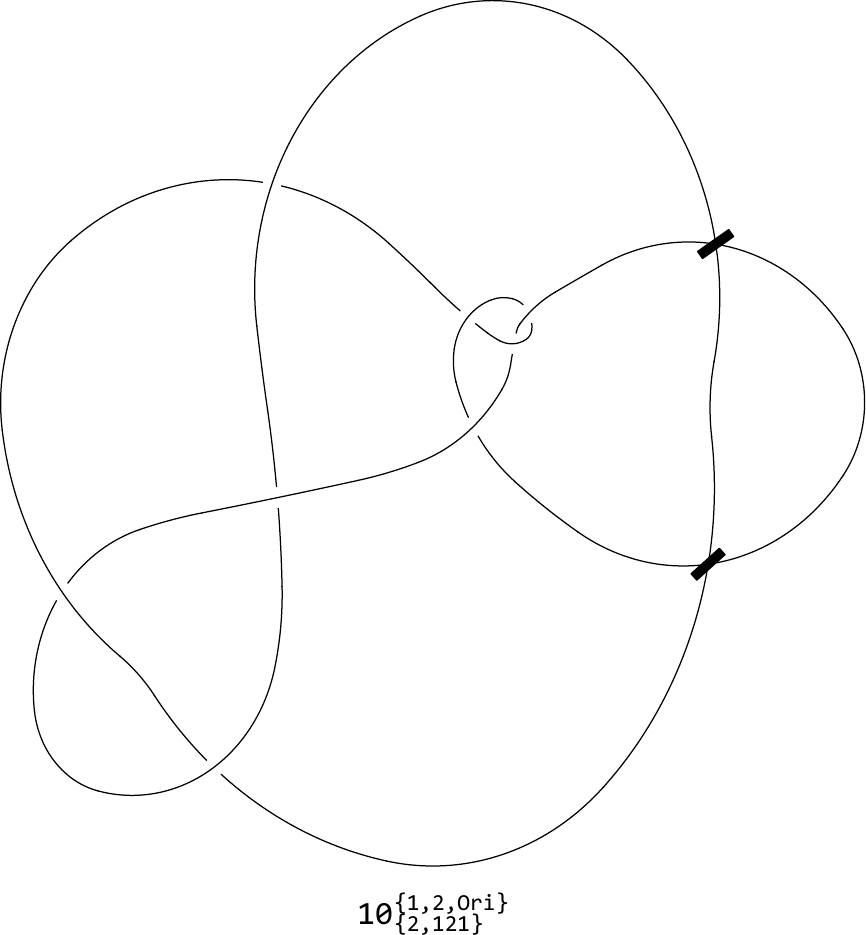}
\includegraphics[width=0.495\textwidth]{./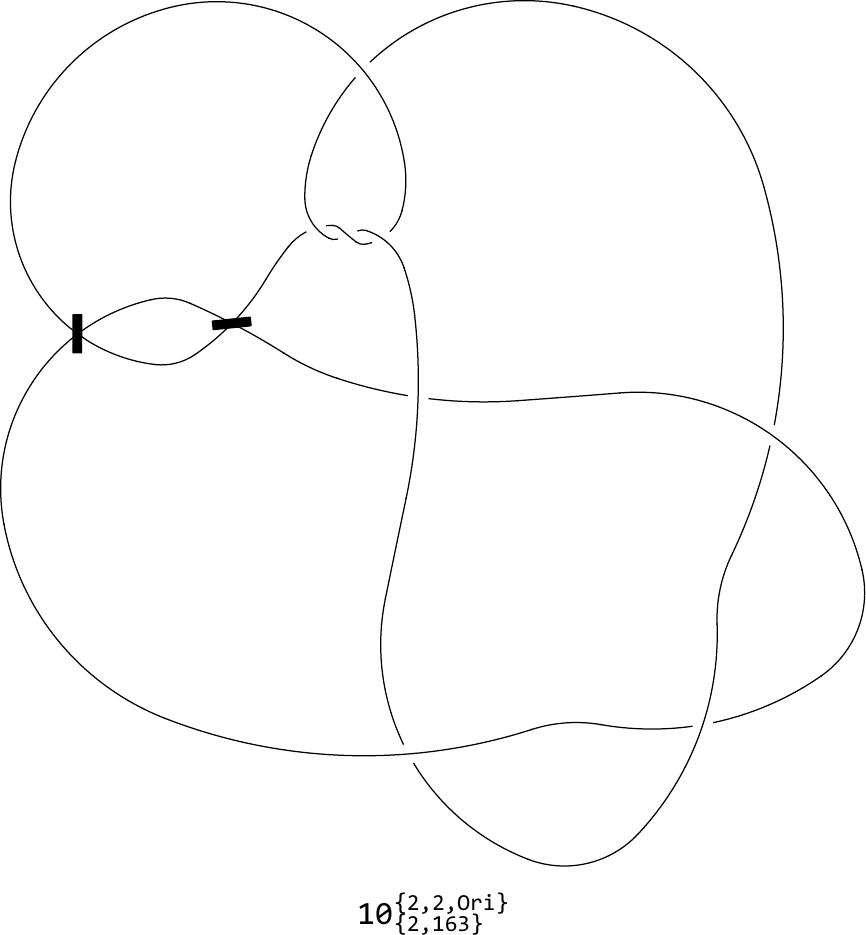}
\includegraphics[width=0.495\textwidth]{./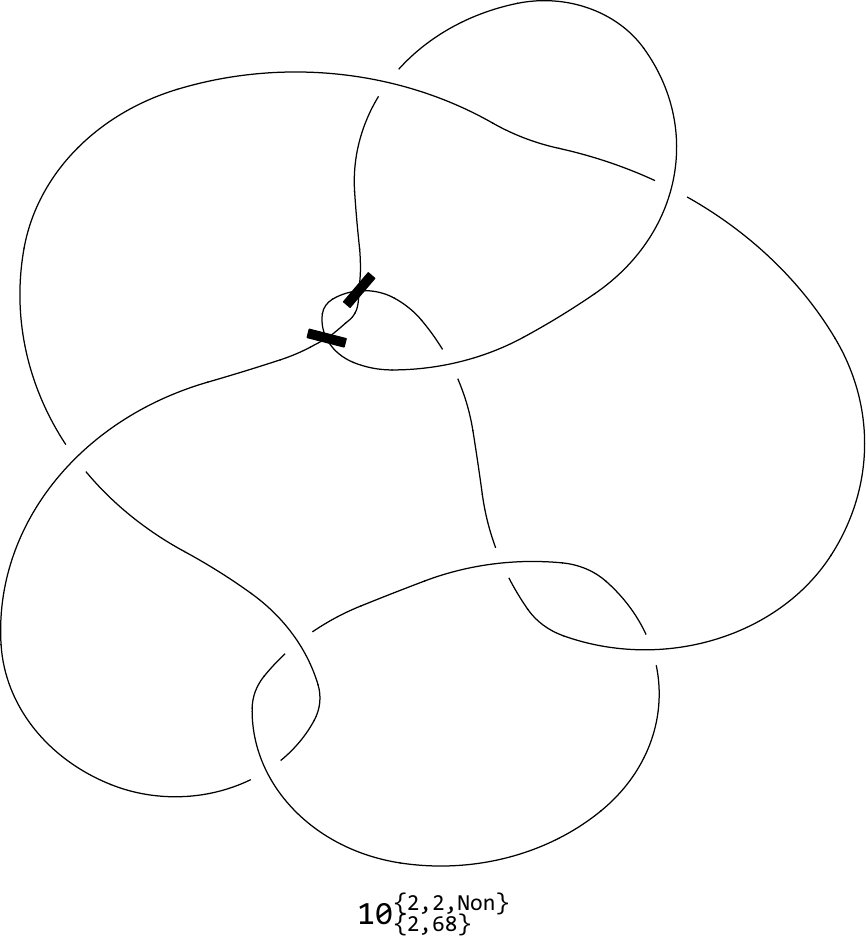}
\includegraphics[width=0.495\textwidth]{./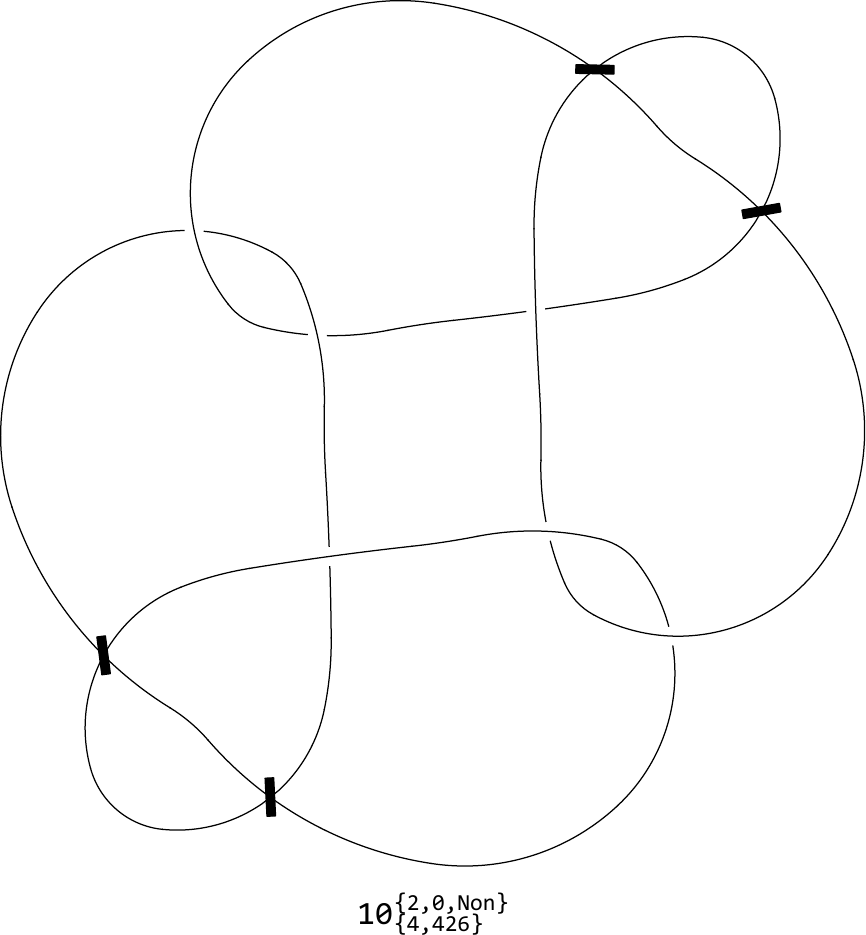}

\caption{Hard nontrivial surface-link diagrams. (cont.)\label{hardNT2}}
\end{figure}

\begin{exercise}
Transform the diagram $9^{\{1,2,Ori\}}_{\{2,38\}}$ in Fig.\;\ref{hardM} to the standard unknot by using Yoshikawa moves.
\end{exercise}

Computational results also naturally led to the following.

\begin{question}
Is every minimal hard prime diagram for the classical unlink (with at least three components) also the minimal hard prime diagram for the surface $2$-unlink?
\end{question}

\begin{question}
What is the number of crossings of minimal hard prime diagrams for surface-unlinks with the base surface $\mathbb{P}^2\sqcup\mathbb{T}^2$ or $\mathbb{T}^2\sqcup\mathbb{T}^2$ or $\mathbb{P}^2\sqcup\mathbb{P}^2$ or $\mathbb{S}^2\sqcup\mathbb{T}^2$? (We know that it has to be at least $11$.)
\end{question}


\begin{thebibliography}{99}
\bibitem{CCM16} J. Cantarella, H. Chapman, and M. Mastin, \emph{Knot probabilities in random diagrams}, Journal of Physics A: Mathematical and Theoretical {\bf 49} (2016), 405001. (28 pages).
\bibitem{DasHou97} O.T. Dasbach, S. Hougardy, \emph{Does the Jones polynomial detect unknottedness?}, Experimental Mathematics {\bf 6} (1997), 51-56.
\bibitem{Fox62} R.H. Fox, A quick trip through knot theory, in {\it Topology of 3-manifolds and related topics} (Prentice-Hall, 1962), 120-167.
\bibitem{Goe34} L. Goeritz, \emph{Bemerkungen zur knotentheorie}, Abh. Math. Sem. Univ. Hamburg {\bf 10} (1934), 201-210.
\bibitem{INO18} A. Ishii, R. Nikkuni, and K. Oshiro. \emph{On calculations of the twisted Alexander ideals for spatial graphs, handlebody-knots and surface-links}, Osaka J. Math. {\bf 55} (2018), 297-313.
\bibitem{JabSaz07} S. Jablan, R. Sazdanovic, \emph{LinKnot: Knot Theory by Computer}, Vol. {\bf 21}. Edited by L. Kauffman, World Scientific (2007).
\bibitem{Jab13} M. Jab\l onowski, \emph{On a surface singular braid monoid}, Topology and its Applications {\bf 160} (2013), 1773-1780.
\bibitem{Jab16} M. Jab\l onowski, \emph{On a banded link presentation of knotted surfaces}, J. Knot Theory Ramifications {\bf 25} (2016), 1640004. (11 pages).
\bibitem{Jab17} M. Jab\l onowski, \emph{Presentations and representations of surface singular braid monoids}, Journal of the Korean Mathematical Society {\bf 54} (2017), 749-762.
\bibitem{JKL15} Y. Joung, J. Kim and S. Y. Lee, \emph{On generating sets of Yoshikawa moves for marked graph diagrams of surface-links}, \textit{J. Knot Theory Ramifications} {\bf 24} (2015), 1550018. (21 pages)
\bibitem{Kam89} S. Kamada, Non-orientable surfaces in 4-space, {\it Osaka J. Math.} {\bf 26} (1989), 367-385.
\bibitem{Kau16} L. Kauffman, \emph{The Unknotting Problem}, in \emph{Open Problems in Mathematics}, Edited by J.F. Nash and Jr., M.Th. Rassias, Springer International Publishing Switzerland (2016) 303-345.
\bibitem{KauLam06} L. Kauffman, S. Lambropoulou, \emph{Unknots and molecular biology}, Milan Journal of Mathematics {\bf 74} (2006), 227-263.
\bibitem{KauLam07} L. Kauffman, S. Lambropoulou, \emph{Unknots and DNA}, in \emph{Current Developments in Mathematical Biology: Proceedings of the Conference on Mathematical Biology and Dynamical Systems, the University of Texas at Tyler, 7--9 October 2005}, Vol. {\bf 38}. World Scientific (2007), 39-68.
\bibitem{KauLam11} L. Kauffman, S. Lambropoulou, \emph{Hard unknots and collapsing tangles}, in \emph{Introductory Lectures on Knot Theory--Selected Lectures presented at the Advanced School and Conference on Knot Theory and its Applications to Physics and Biology ICTP, Trieste, Italy, 11--29 May 2009}, Edited by L. Kauffman, S. Lambropoulou, S. Jablan, and J. Przytycki, World Scientific (2011) 187-247.
\bibitem{KSS82} A. Kawauchi, T. Shibuya, and S. Suzuki, \emph{Descriptions on surfaces in four-space, I; Normal forms}, Math. Sem. Notes Kobe Univ. {\bf 10} (1982), 72-125.
\bibitem{KeaKur08} C. Kearton, V. Kurlin, \emph{All 2-dimensional links in 4-space live inside a universal 3-dimensional polyhedron}, Algebraic and Geometric Topology {\bf 8}(3) (2008), 1223-1247.
\bibitem{Kon01} D. Konig, \emph{Theorie der endlichen und unendlichen Graphen}, American Mathematical Soc., (2001).
\bibitem{Lom81} S.J. Lomonaco, Jr., \emph{The homotopy groups of knots I. How to compute the algebraic 2-type}, Pacific J. Math. {\bf 95} (1981), 349-390.
\bibitem{Mat15} M. Mastin, \emph{Links and planar diagram codes}, J. Knot Theory Ramifications {\bf 24} (2015), 1550016. (18 pages).
\bibitem{Swe01} F.J. Swenton, \emph{On a calculus for 2-knots and surfaces in 4-space}, J. Knot Theory Ramifications {\bf 10} (2001), 1133-1141.
\bibitem{Thi01} M. Thistlethwaite, \emph{Links with trivial Jones polynomial}, J. Knot Theory Ramifications {\bf 10} (2001), 641-643.
\bibitem{Yos94} K. Yoshikawa, \emph{An enumeration of surfaces in four-space}, Osaka J. Math. {\bf 31} (1994), 497-522.
\end{thebibliography}
\end{document}